\newcounter{results}[section]
\newcounter{results1}
\newcounter{resultss}
\newtheorem{thm}[results]{Theorem}
\newtheorem{lem}[results]{Lemma}
\newtheorem{prop}[results]{Proposition}
\newtheorem{conj}[results]{Conjecture}
\newtheorem{claim}[results]{Claim}
\newtheorem{case}[results1]{Case}
\newtheorem{subcase}[resultss]{Case}
\newtheorem{defi}[results]{Definition}
\newcommand{\eps}{\varepsilon}
\newcommand{\oururl}{\url{https://lidicky.name/pub/c5frac}}
\newcommand*\patchAmsMathEnvironmentForLineno[1]{%
  \expandafter\let\csname old#1\expandafter\endcsname\csname #1\endcsname
  \expandafter\let\csname oldend#1\expandafter\endcsname\csname end#1\endcsname
  \renewenvironment{#1}%
     {\linenomath\csname old#1\endcsname}%
     {\csname oldend#1\endcsname\endlinenomath}}%
\newcommand*\patchBothAmsMathEnvironmentsForLineno[1]{%
  \patchAmsMathEnvironmentForLineno{#1}%
  \patchAmsMathEnvironmentForLineno{#1*}}%
\title{$C_5$ is almost a fractalizer}
\author{
Bernard Lidick\'{y}\thanks{Department of Mathematics, Iowa State University, Ames, IA, E-mail: {\tt lidicky@iastate.edu}. Research of this author is supported in part by NSF grant DMS-1855653 and DMS-2152490 and Scott Hanna fellowship.}
\and
Connor Mattes \thanks{Department of Mathematical and Statistical Sciences, University of Colorado Denver, E-mail: {\tt connor.mattes@ucdenver.edu}. } \and
Florian Pfender\thanks{Department of Mathematical and Statistical Sciences, University of Colorado Denver, E-mail: {\tt 
Florian.Pfender@ucdenver.edu}. Research is partially supported by NSF grant DMS-1855622 and DMS-2152498.} 
}
\newcommand{\pcycleadd}[2]{   
	\pgfmathsetmacro{\x}{360/(#1+1)}
	\pgfmathsetmacro{\y}{#2-1}
	\pgfmathsetmacro{\z}{#2+1}
	\fill (#2*\x:1.4) circle (2pt);
	\ifthenelse{ #2=1 }{} {\draw[thick] (\y*\x:1)--(#2*\x:1.4);}
	\ifthenelse{ #2=#1}{} {\draw[thick] (#2*\x:1.4)--(\z*\x:1);}
	\draw[thick,dotted] (#2*\x:1)--(#2*\x:1.4);
}
\newcommand{\pcycleaddtwo}[2]{   
	\pgfmathsetmacro{\x}{360/(#1+1)}
	\pgfmathsetmacro{\y}{#2-1}
	\pgfmathsetmacro{\z}{#2+1}
	\fill (#2*\x:1.4) circle (2pt)   (#2*\x:.6) circle (2pt);
	\ifthenelse{ #2=1 }{} {\draw[thick] (#2*\x:1.4)--(\y*\x:1)--(#2*\x:.6);}
	\ifthenelse{ #2=#1}{} {\draw[thick] (#2*\x:1.4)--(\z*\x:1)--(#2*\x:.6);}
	\draw[thick,dotted] (#2*\x:1.4)--(#2*\x:1)--(#2*\x:.6) to[bend left=45] (#2*\x:1.4);
}
\newcommand{\extremal}[5] { 
   \pgfmathsetmacro{\w}{0.16666^#3}
	\node[draw=black, fill=black, minimum size=\w*2.3cm,regular polygon, regular polygon sides = #1] at (#4,#5) {}; 
	\node[draw=black, fill=white, minimum size=\w*1.7cm,regular polygon, regular polygon sides = #1] at (#4,#5) {};
	\node[minimum size=\w*2cm,regular polygon, regular polygon sides = #1] (a) at (#4,#5) {}; 
    	\foreach \y in {1,2,...,#1} {
    		\fill[draw=black, fill=white] (a.corner \y) circle[radius=10pt];
    		\ifthenelse{ #2 = #3 }{} {
			\path (a.corner \y);
			\pgfgetlastxy{\XCoord}{\YCoord}; 
			\extremalb{#1}{#2}{\the\numexpr #3+1\relax}{\XCoord}{\YCoord};
		}
		\ifthenelse{ \y = #1 }{} {\path (a.corner \the\numexpr \y+1\relax); }
    	}
}
\newcommand{\extremalb}[5] { 
   \pgfmathsetmacro{\w}{0.25^#3}
	\node[draw=black, fill=black, minimum size=\w*2.3cm,regular polygon, regular polygon sides = #1] at (#4,#5) {}; 
	\node[draw=black, fill=white, minimum size=\w*1.7cm,regular polygon, regular polygon sides = #1] at (#4,#5) {};
	\node[minimum size=\w*2cm,regular polygon, regular polygon sides = #1] (b) at (#4,#5) {}; 
    	\foreach \y in {1,2,...,#1} {
    		\fill[draw=black, fill=white] (b.corner \y) circle[radius=\w*10pt];
    		\ifthenelse{ #2 = #3 }{} {
			\path (b.corner \y);
			\pgfgetlastxy{\XCoord}{\YCoord}; 
			\extremalc{#1}{#2}{\the\numexpr #3+1\relax}{\XCoord}{\YCoord};
		}
		\ifthenelse{ \y = #1 }{} {\path (b.corner \the\numexpr \y+1\relax); }
    	}
}
\newcommand{\extremalc}[5] { 
   \pgfmathsetmacro{\w}{0.2^#3}
   	\node[draw=black, fill=black, inner sep=0pt,minimum size=\w*2.3cm,regular polygon, regular polygon sides = #1] at (#4,#5) {}; 
	\node[draw=black, fill=white, inner sep=0pt,minimum size=\w*1.7cm,regular polygon, regular polygon sides = #1] at (#4,#5) {};
	\node[inner sep=0pt,minimum size=\w*2cm,regular polygon, regular polygon sides = #1] (c) at (#4,#5) {}; 
    	\foreach \y in {1,2,...,#1} {
    		\fill[draw=black, fill=white] (c.corner \y) circle[radius=\w*10pt];
    		\ifthenelse{ #2 = #3 }{} {
			\path (c.corner \y);
			\pgfgetlastxy{\XCoord}{\YCoord}; 
			\extremal{#1}{#2}{\the\numexpr #3+1\relax}{\XCoord}{\YCoord};
		}
		\ifthenelse{ \y = #1 }{} {\path (c.corner \the\numexpr \y+1\relax); }
    	}
}
\tikzset{
vtx/.style={inner sep=1.5pt, outer sep=0pt, circle, fill=black,draw=black},
}
\begin{document}
\maketitle

\begin{abstract}
We determine the maximum number of induced copies of a 5-cycle in a graph on $n$ vertices  for every $n$. Every extremal construction is a balanced iterated blow-up of the 5-cycle with the possible exception of the smallest level where for $n=8$,  the M\"obius ladder achieves the same number of induced 5-cycles as the blow-up of a 5-cycle on 8 vertices.

This result completes work of Balogh, Hu, Lidick\'y, and Pfender [Eur. J. Comb. 52 (2016)] who proved an asymptotic version of the result. Similarly to their result, we also use the flag algebra method, but we use a new and more sophisticated approach which allows us to extend its use to small graphs.

\noindent\textit{Keywords: inducibility, flag algebras, 5-cycle, fractalizer}\\
\textit{Mathematics Subject Classification: 05C35,  05C38}
\end{abstract}

\section{Introduction}

The {\em inducibility} of a graph $H$ on $k$ vertices is the limit of the maximum density of induced copies of $H$ present in an extremal graph
$G$ on $n$ vertices, where $n$ goes to infinity:
\[
\mbox{ind}(H):=\lim_{n\to\infty}\max_{|G|=n}\frac{|\{\{v_1,\ldots,v_k\}:G[\{v_1,\ldots,v_k\}]\simeq H\}|}{{n\choose k}}.
\]
We say that $G$ is a {\em blow-up of $H$} if either $|H|>|G|$, or if we can get $G$ from $H$ by replacing each vertex $v\in V(H)$ by some non-empty graph $H_v$, and every edge $vw\in E(H)$ by the complete bipartite graph between $H_v$ and $H_w$. If  $|H_v|-|H_w|\le 1$ for any two vertices $v,w\in V(H)$, this is called a {\em balanced blow-up of $H$}. The graph $G$ is an {\em iterated balanced blow-up of $H$} if further every $H_v$ itself is an  iterated balanced blow-up of $H$; see Figure~\ref{fig:C5}. 

Pippenger and Golumbic~\cite{PippengerGolumbic} observe that the iterated balanced blow-ups of $H$ give a lower bound for the inducibility.  In this same paper, they ask for which graphs this bound is sharp, and they conjecture that this bound is sharp for all cycles $C_k$ with $k\ge 5$. 
Balogh, Hu,  Lidick\'y, and Pfender prove the first case $k=5$ in~\cite{C5}, and Brandt, Lidick\'y, and Pfender extend similar methods to the case $k=6$, see~\cite{Brandt}. 
Kr\'{a}l', Norin, and Volec~\cite{KNV2019} give a general upper bound that every $n$-vertex graph has at most $2n^k/k^k$ induced  cycles of length $k$.
In a very recent paper, Blumenthal and Phillips show a result similar to~\cite{C5} for the net graph $N$ on six vertices~\cite{Blumenthal}, the unique graph with degree sequence $(3,3,3,1,1,1)$.
For other results on inducibility of graphs and oriented graphs, see~\cite{liu2020stability,MR1292981,MR4040054,MR3386015,bozyk2020inducibility,tournaments}. 

While inducibility is by definition an asymptotic concept, we are in general interested in the extremal question of maximizing the number of induced copies of a given graph $H$ in a host graph on $n$ vertices, and the extremal graphs. The previous results fall short of a complete answer to this question unless $n=5^k$ or $n=6^k$, respectively. In this paper, we completely answer this question for $H=C_5$, for all $n$.

\begin{figure}
\begin{center}
\scalebox{2}{
\begin{tikzpicture}
\extremal{5}{2}{0}{0}{0}
\end{tikzpicture}
}
\end{center}
\caption{Iterated blow-up of $C_5$.}\label{fig:C5}
\end{figure}
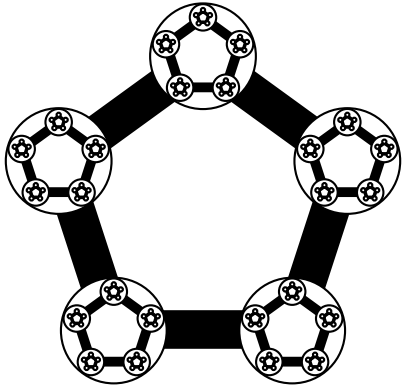

Iterated balanced blow-ups are self-similar much in the same way that fractals are, and so we call a graph $H$ a {\em fractalizer} if its extremal graphs are in fact iterated balanced blow-ups of $H$. 
To make this notion more precise, there are different options to formalize this idea.
\begin{defi}
All of the following properties in some sense formalize the idea of a fractalizer.
\begin{itemize}
\item[(F1)] The iterated balanced blow-ups of $H$ achieve in limit the inducibility of $H$. 
\item[(F2)] There exists an $n_0$ such that for every $n\ge n_0$, some graphs on $n$ vertices maximizing the number of induced copies of $H$ are  balanced blow-ups of $H$.
\item[(F3)] There exists an $n_0$ such that for every $n\ge n_0$, all graphs on $n$ vertices maximizing the number of induced copies of $H$ are  balanced blow-ups of $H$.
\item[(F4)] For every $n$,  an iterated balanced blow-up of $H$ on $n$ vertices maximizes the number of induced copies of $H$.
\item[(F5)] For every $n$, all graphs on $n$ vertices maximizing the number of induced copies of $H$ are iterated balanced blow-ups of $H$.
\end{itemize}
\end{defi}

The following proposition follows straightforward from the definition.
\begin{prop}
For every $H$, (F5) $\Rightarrow$ (F4) $\Rightarrow$ (F2) $\Rightarrow$ (F1) and (F5) $\Rightarrow$ (F3) $\Rightarrow$ (F2) $\Rightarrow$ (F1).
\end{prop}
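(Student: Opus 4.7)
\medskip\noindent\textbf{Proof plan.}
The implications among (F2)--(F5) all follow from the observation that these properties form a progressive hierarchy of strengthenings, together with the basic fact that an iterated balanced blow-up of $H$ is in particular a balanced blow-up of $H$ (its top-level parts $H_v$ are non-empty graphs with sizes differing by at most one). Thus (F5) $\Rightarrow$ (F4), (F5) $\Rightarrow$ (F3), and (F3) $\Rightarrow$ (F2) are immediate quantifier weakenings (from ``all extremal graphs'' to ``some extremal graph,'' or from ``every $n$'' to ``every $n \ge n_0$''), while (F4) $\Rightarrow$ (F2) uses the iterated-is-balanced observation to supply the required witness, say with $n_0 = |H|$.

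The substantive implication is (F2) $\Rightarrow$ (F1). Let $k := |V(H)|$. My plan is to compare the extremal count $i(H,n)$ with the iterated balanced blow-up count $I(H,n)$ along the subsequence $n = k^\ell$. For $\ell$ large enough that $k^\ell \ge n_0$, (F2) produces an extremal graph $G_n$ which is a balanced blow-up of $H$ with each of its $k$ parts of size $m := k^{\ell-1}$. I partition the induced $H$-copies in $G_n$ into \emph{transversal} (exactly one vertex per part), \emph{internal} (all $k$ vertices in a single part), and \emph{mixed} (two or more vertices in some part, but not all in one). The transversal contribution is a function of the part sizes alone, so it agrees exactly with the corresponding count in the iterated balanced blow-up on $n$ vertices. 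The internal contribution is at most $k \cdot i(H, m)$. The mixed contribution is $O(n^{k-1})$ uniformly in the internal structure of the parts, since each mixed copy uses at most $k-1$ parts.

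Setting $D(\ell) := i(H, k^\ell) - I(H, k^\ell) \ge 0$, these three bounds combine to give the recurrence $D(\ell) \le k\, D(\ell-1) + C_H\, k^{(k-1)\ell}$, where $C_H$ depends only on $H$. Iterating yields $D(\ell) = O\bigl(\ell\, k^{(k-1)\ell}\bigr)$, and dividing by $\binom{k^\ell}{k} = \Theta(k^{k\ell})$ shows $D(\ell)/\binom{k^\ell}{k} \to 0$. Since $i(H,n)/\binom{n}{k}$ is monotone non-increasing in $n$ (by the standard averaging argument over $k$-subsets) and converges to $\mathrm{ind}(H)$, the iterated balanced blow-ups attain the same limit along $n = k^\ell$, which is precisely the content of (F1).

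The step I expect to be the main obstacle is the uniform mixed-copies bound, which must hold regardless of which graphs occupy the parts. The argument is a direct counting estimate: if a mixed copy has $a \ge 2$ vertices in one part and singletons elsewhere, there are at most $\binom{k}{a}\binom{m}{a}m^{k-a} = O(m^{k-1})$ vertex choices per configuration, and summing over the finitely many configurations of $H$ that are consistent with such an assignment preserves the bound. Once this estimate is in place, the recurrence for $D(\ell)$ closes and propagates cleanly through the $\ell$ levels of iteration, yielding (F1).
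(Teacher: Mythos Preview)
Your reductions among (F2)--(F5) are fine; the paper treats the whole proposition as immediate from the definitions and supplies no argument.

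The gap is in (F2) $\Rightarrow$ (F1). Your mixed-copy estimate is arithmetically wrong: $\binom{m}{a}m^{k-a}$ with $m=n/k$ and $a\ge 2$ is $\Theta(m^k)$, not $O(m^{k-1})$; already for $a=2$ it equals $\tfrac12 m(m-1)m^{k-2}\sim m^k/2$. With an error term of order $n^k$ the recurrence becomes $D(\ell)\le kD(\ell-1)+Ck^{k\ell}$, which only yields $D(\ell)=O(k^{k\ell})$, and $D(\ell)/\binom{k^\ell}{k}$ need not tend to $0$.

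This is not a fixable slip, because balanced blow-ups of $H$ can genuinely contain $\Theta(n^k)$ mixed induced copies of $H$. Take $H=C_4$ and replace each vertex by an independent set of size $n/4$; the resulting blow-up is precisely $K_{n/2,n/2}$, and any two vertices of $X_1$ together with any two of $X_2$ already induce a $C_4$, giving $\binom{n/4}{2}^2=\Theta(n^4)$ mixed copies. In fact $K_{\lfloor n/2\rfloor,\lceil n/2\rceil}$ is extremal for $C_4$ with $\mathrm{ind}(C_4)=3/8$, whereas the iterated balanced blow-up of $C_4$ achieves only $4!/(4^4-4)=2/21$. Hence $C_4$ satisfies (F2) but not (F1), so the implication (F2) $\Rightarrow$ (F1) fails in general and no argument can establish it. The paper's results do not depend on this step; note that (F4) $\Rightarrow$ (F1) is genuinely immediate, since (F4) forces $I(H,n)=i(H,n)$ for every $n$.
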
 

In these terms, Pippenger and Golumbic are interested in graphs with (F1). The theorems in~\cite{C5}, \cite{Brandt} and \cite{Blumenthal} imply the stronger notion (F3) for the considered graphs.

The term fractalizer for this concept is due to Fox, Huang and Lee in~\cite{Fox}, and they choose to ask for the strongest notion (F5).
\begin{defi}
A graph $H$ is a {\em fractalizer}, if for every $n$, all graphs on $n$ vertices maximizing the number of induced copies of $H$ are iterated balanced blow-ups of $H$.
\end{defi}

If $H$ is a fractalizer, then its complement is also a fractalizer. Further, each complete and each empty graph is trivially  a fractalizer. Other than these two classes of graphs, no explicit fractalizers are known among simple graphs. We have checked all graphs on up to $7$ vertices, and none of them is a fractalizer. We have ruled out most candidates on $8$ vertices, and $C_8$ is not a fractalizer. We conjecture that $C_9$ is a fractalizer, but our methods are not yet powerful enough to prove it. On the other hand, the main result by Fox, Huang, and Lee~\cite{Fox} implies that almost all graphs are fractalizers: for $n\to\infty$ and constant $p$, a random graph $G_{n,p}$ is almost surely a fractalizer. A similar result is proved independently by Yuster in~\cite{Yuster}. 

The notion of fractalizer can be extended to other structures.
Mubayi and Razborov~\cite{Dhruv} showed that every tournament
on $k \geq 4$ vertices whose edges are colored by $\binom{k}{2}$ distinct colors is a fractalizer in the (F4) sense. They used this to determine the precise number where a certain Ramsey problem transitions from polynomial to exponential growth, settling a conjecture of Erd\H{o}s and Hajnal~\cite{MR0337636} for all $k \geq 4$.

%
%
%
The only non-trivial graph with (F1) on at most $5$ vertices is the $5$-cycle, as all other graphs have constructions with more induced subgraphs in the limit. It has been observed by Michael~\cite{mobius} that for $n=8$, there exist graphs with $8$ induced $5$-cycles other than the balanced blow-ups: the M\"obius ladder on $8$ vertices, i.e.~an $8$-cycle to which we add the $4$ diagonals, and its complement. This implies that for many $n$, there are graphs which match the number of $5$-cycles in the iterated balanced blow-ups. Take for example $n=40$, and consider the balanced blow-up of $H=C_5$ with some of the $H_v$ being  M\"obius ladders. Such a construction extends for all $n$ with $7\cdot 5^k< n < 9\cdot 5^k$ for some $k\in \mathbb{N}$.

The purpose of this paper is two-fold.  We show that $C_5$ has (F4). We do this in a very strong sense, almost showing that $C_5$ is a fractalizer. 
Every extremal graph can differ from an iterated balanced blow-up only at the smallest level, and only in the very limited way described above.
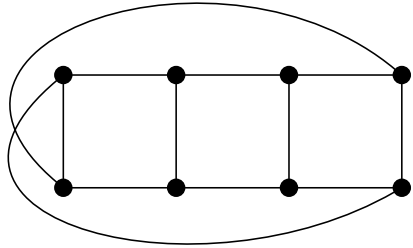
\begin{figure}
\begin{center}
\scalebox{1.5}{
\begin{tikzpicture}
\draw
\foreach \x in {0,1,2,3}{
(\x,0) node[vtx](x0\x){} -- (\x,1) node[vtx](x1\x){}
}
(x00)--(x03)
(x10)--(x13)
(x03) to[out=210,in=220,looseness=1.6] (x10)
(x13) to[out=140,in=140,looseness=1.6] (x00)
;
\end{tikzpicture}
}
\end{center}
\caption{M\"obius ladder on $8$ vertices.}\label{fig:mog}
\end{figure}

\begin{thm}\label{thm_main}
For all $n\ne 8$, all graphs on $n$ vertices maximizing the number
of induced copies of $C_5$ are balanced blow-ups of $C_5$. For $n=8$, the only extremal graphs are the balanced blow-ups of $C_5$, the M\"obius ladder, and its complement. 
Further, the only fractalizers on $5$ vertices are $K_5$ and $\overline{K}_5$.
\end{thm}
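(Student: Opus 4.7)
The plan is to prove Theorem~\ref{thm_main} by strong induction on $n$, using a flag-algebra-type certificate at each step to force structure on any extremal graph. For the induction step, the goal is to show that if $G$ is a graph on $n$ vertices maximizing the number of induced $C_5$s, then $V(G)$ admits a partition $V_0\cup\cdots\cup V_4$ with $\big||V_i|-|V_j|\big|\le 1$ such that $V_iV_{i+1\bmod 5}$ induces a complete bipartite graph and $V_iV_{i+2\bmod 5}$ has no edges, i.e.\ $G$ is a balanced blow-up of $C_5$. Extremality then propagates: the induced $C_5$ count of such a blow-up splits as a fixed ``cross'' count (from one vertex in each part) plus the sum over $i$ of the induced $C_5$ count inside $V_i$, so each $V_i$ must itself maximize the induced $C_5$ count on $|V_i|$ vertices. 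By the induction hypothesis each $V_i$ is a balanced blow-up of $C_5$ (or, if $|V_i|=8$, possibly the M\"obius ladder or its complement), which chained together gives exactly the structure in the theorem.

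The central ingredient is this structural reduction, and it is here that the paper extends flag algebras from the asymptotic to the finite-$n$ regime. The plan is to build on the sum-of-squares certificate from~\cite{C5} that yields $\mathrm{ind}(C_5)=120/5^5$: reinterpret it as a linear combination of inequalities on induced subgraph densities valid at every finite $n$, and then absorb the inevitable $o(1)$ slack by introducing further local correction terms supported on small induced subgraphs. If the tightened certificate is tight exactly on balanced $C_5$-blow-ups (with the $n=8$ exception), then equality analysis delivers the structural reduction. The verification of the final certificate is almost certainly computer-assisted, as in~\cite{C5,Brandt}, and is where the bulk of the paper's technical work should live.

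The small base cases, up to and including $n=8$, are handled by explicit (computer) enumeration of all graphs on $n$ vertices and comparison of their induced $C_5$ counts. At $n=8$ this yields the three extremal families in the theorem, and this exception must be carried along as an alternate output in the induction. The fractalizer statement for $5$-vertex graphs then follows by combining~\cite{EvenZohar2014}, which eliminates all candidates on at most $5$ vertices other than $K_5$, $\overline{K}_5$, and $C_5$, with the $n=8$ exception in Theorem~\ref{thm_main}, which shows $C_5$ fails property~(F5); hence only $K_5$ and $\overline{K}_5$ remain.

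The hardest step will be the small-$n$ extension of the flag algebra method. Ordinary flag algebras produce only the asymptotic bound $\mathrm{ind}(C_5)=120/5^5$, while here one needs an SDP certificate whose slack is zero at every $n$ and whose equality cases are sharp enough to pin down the partition structure above. The presence of a genuine competing extremal graph at $n=8$ makes the uniqueness analysis particularly delicate: the certificate must accommodate this one tie without admitting other near-blow-up structures at larger $n$ as further exceptions. Once this certificate is in hand, the induction and the fractalizer corollary are essentially bookkeeping.
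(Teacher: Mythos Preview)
Your inductive framework and the recursive count are correct, and the final paragraph about the fractalizer corollary is fine. The gap is in the structural step, and it is the whole proof.

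You propose to take the asymptotic sum-of-squares certificate from~\cite{C5} and tighten it to a finite-$n$ certificate whose equality cases are exactly balanced $C_5$-blow-ups. This is not what the paper does, and there is no indication such a certificate exists. (Incidentally, $\mathrm{ind}(C_5)=1/26=120/(5^5-5)$, not $120/5^5$; the latter is only the non-iterated blow-up density.) A direct finite-$n$ flag algebra bound on the induced $C_5$ count cannot be tight for all $n$: the extremal density $C(n)$ oscillates with the residue of $n$ modulo powers of $5$, so no single linear inequality in subgraph densities with $o(1)$ correction terms will be sharp across all $n$, and ``equality analysis'' of such a certificate will not isolate the blow-up structure. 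Your proposal defers the entire difficulty to ``absorb the slack by local correction terms'' without saying what those terms are or why they should suffice.

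The paper's route is genuinely different. First, instead of applying flag algebras to $G$, it applies them to the graph limit $G^*$ obtained by replacing each vertex of $G$ by an iterated $C_5$-blow-up; this sidesteps all finite-$n$ error terms because $G^*$ \emph{is} a limit object, and one has an exact formula $C(G^*)=\frac{n+26n(n-1)(n-2)(n-3)(n-4)C(G)}{26n^5}$ relating $C(G^*)$ back to $C(G)$. Second, the flag algebra computation does not bound $C(G^*)$; it bounds a different quantity $C^{\bullet\bullet}(G^*)$, the density of $7$-vertex balanced $C_5$-blow-ups, linearly from below in terms of $C(G^*)$. Third, a rooted version of $C^{\bullet\bullet}$, maximized over induced $5$-cycles, actually \emph{produces} a candidate partition $X_1,\ldots,X_5$ of $V(G)$ and a quantitative bound on the number of ``funky'' (wrongly-colored) edges across that partition. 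None of these three ideas appear in your plan. The remainder of the paper is then a lengthy stability analysis---counting $C_5$'s through funky edges, setting up explicit polynomial programs in the part sizes and funky-edge parameters, and solving them either exactly for $9\le n\le 1000$ or via a discretized mesh for $n\ge 1000$---to show the funky edge count must be zero. A final short argument forces the part sizes to be balanced. This stability machinery is where the real work lies, and it has no counterpart in your outline.
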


As a consequence, this theorem provides a novel proof that the $5$-cycle has (F3) with $n_0=9$, compared to a much larger $n_0$ implied but never determined in~\cite{C5}. We first tried to repeat the arguments in~\cite{C5} to prove Theorem~\ref{thm_main} through some sort of enumeration of small cases, but we quickly realized that this was hopeless. Instead, we find a different and more direct approach that is much more amendable. We still rely heavily on large computations, but the arguments are considerably simpler.

Computations appear in several parts of the proof. First, flag algebra computations are used to establish a key inequality, and this is the only part that requires significant computational resources. Technically, these computations themselves are not part of the proof, but even the certificate in form of a semidefinite matrix is too large to present here. This inequality is then used to show the general structure of the extremal graphs, with a small number of possible defects. These defects are then addressed via stability arguments, yielding more inequalities. For small cases up to $n=1000$, we can then construct all graphs satisfying all inequalities with the help of the computer, and count the cycles. For larger $n$, we first create a continuous model, which we then discretize using a dynamic mesh to show that there are no defects in the construction.

In this write up, we describe all used programs to a point that an interested reader could recreate them, but they are not the main focus of the paper. Oftentimes, we choose simpler programs at the cost of slightly longer running time. While some cases could be checked by hand, and further arguments could reduce some computations, this would not enhance our insight into the problem.
Computer programs used in proofs are available on \href{https://arxiv.org/abs/2102.06773}{arXiv} and at \oururl.

\section{Proof of Theorem~\ref{thm_main}}

The proof proceeds by induction on $n$.  
We use flag algebra calculations to establish an inequality between subgraph densities central to our argument. In this process, we enumerate all graphs with at most $8$ vertices. The extra effort to validate the statement for these graphs is minimal. Therefore, we assume now that $G$ is a graph on $n\ge 9$ vertices, and the statement is true for all smaller graphs.

As $C_5$ is self complementary, we can often simplify our work by using the complement. For this purpose, we interchangeably consider two-colorings of complete graphs with red and blue edges instead of the equivalent model of graphs with edges and non-edges. Note further that every induced red $C_5$ is an induced blue $C_5$ at the same time, so we will often just talk about an induced $C_5$ without specifying the color.
	
We will denote $C(G)$ to be the $5$-cycle density in the graph $G$. In the specific case where $G$ is an iterated balanced blow-up of the $5$-cycle on $n$ vertices, we will denote this quantity by $C(n)$. Note here that all iterated balanced blow-ups of $C_5$ on $n$ vertices have the same number of induced $5$-cycles. If we let $n = 5k+a$, $a,k \in \mathbb{N}, 0 \le a < 5$, then we easily compute 
\begin{equation} \label{eq:blowup}
C(n) = \frac{k^{5-a}(k+1)^{a} + (5-a) {k \choose 5} C(k) + a {k+1 \choose 5}C(k+1)}{{n \choose 5}} .
\end{equation} 
Notice that 
\begin{align}\label{eq:126}
\lim_{n \to \infty}C(n) = \frac{1}{26}.
\end{align}

	
%
%
%

As mentioned above, we will use the flag algebra method to prove a central inequality in Lemma~\ref{funky} below. This is a bit counterintuitive as the method is designed for large graphs, or more precisely, for graph limits, and $G$ has fixed, and possibly small, order. For this reason, we will look at a balanced blow-up $G^*$ of $G$. Flag algebras are then able to give bounds for $G^*$, which we can then use to infer bounds for $G$.

Let $G^*_k$ be the graph which we get by replacing every vertex of $G$ on $n$ vertices by an iterated balanced  blow-up of $C_5$ on $5^k$ vertices, where $k$ is very large, so $|G^*_k| = n 5^k$. Then let $G^*$ be the limit object as $k \to \infty$. This definition ensures that $G^*$ maximizes the density of induced $5$-cycles over all balanced blow-ups of $G$ by the results in~\cite{C5}, but we will not use this fact in our proof. Let $G_v$ for $v \in V(G)$ denote the set of vertices in $G^*$ that are in the blow-up set of $v$.
We can then calculate $C(G^*)$ based on $C(G)$. 
In the following formula we use \eqref{eq:126}.
We further use that every induced $C_5$ in $G^*$ either completely lies in some $G_v$, or intersects five different sets $G_v$ in one vertex each and obtain
\begin{equation} \label{eq:limit} 	
C(G^*) = \frac{n + 26 n (n-1)(n-2)(n-3)(n-4) C(G)}{26 n^5}.
\end{equation}

Similarly as above, in the special case where $G$ is a balanced iterated blow-up of a $5$-cycle on $n$ vertices, we will define $C(n^*) := C(G^*)$. Note that $C(n^*)$ can be calculated explicitly from \eqref{eq:blowup} and \eqref{eq:limit}.  

Let $C^{\bullet \bullet}$ be the class of balanced blow-ups of $C_5$ on $7$ vertices.
There are $6$ different graphs in $C^{\bullet \bullet}$, up to isomorphism, differentiated by the location of the blow-up sets of size two, and by the color of the edges inside the blow-up sets, see Figure~\ref{fig-C**}. 
Let $C^{\bullet \bullet}(G)$ be the combined induced density of $C^{\bullet \bullet}$ in $G$.
For any set $X \subseteq V(G)$ of at most $7$ vertices, let $C^{\bullet \bullet}_X(G)$ denote the density of $7-|X|$ element vertex sets $Y$ disjoint from $X$ such that $G[X \cup Y]$ is isomorphic to a graph in $C^{\bullet \bullet}$. 

\begin{figure}
\begin{center}
\newcommand{\base}{
\draw \foreach \x  in {0,...,4}{
(90+\x*72:1) node[vtx](x\x){}
(90+\x*72:1.3) coordinate(y\x)
(90+\x*72:1.4) coordinate(z\x)
}
(x0)--(x1)--(x2)--(x3)--(x4)--(x0)
;
}
\begin{tikzpicture} \base \draw (y4) node[vtx]{}  (x3)--(y4)--(x0)   (y1) node[vtx]{} (x0)--(y1)--(x2);  \end{tikzpicture}
\begin{tikzpicture} \base \draw (y4) node[vtx]{}  (x3)--(y4)--(x0)   (x1) -- (y1) node[vtx]{} (x0)--(y1)--(x2);  \end{tikzpicture}
\begin{tikzpicture} \base \draw (x4)-- (y4) node[vtx]{}  (x3)--(y4)--(x0)    (x1) --  (y1) node[vtx]{} (x0)--(y1)--(x2);  \end{tikzpicture}\\
\begin{tikzpicture} \base \draw (y3) node[vtx]{}  (x2)--(y3)--(x4)   (y2) node[vtx]{} (x1)--(y2)--(x3) (y2)--(y3);  \end{tikzpicture}
\begin{tikzpicture} \base \draw (x3)--(y3) node[vtx]{}  (x2)--(y3)--(x4)   (y2) node[vtx]{} (x1)--(y2)--(x3) (y2)--(y3);  \end{tikzpicture}
\begin{tikzpicture} \base \draw (x3)--(y3) node[vtx]{}  (x2)--(y3)--(x4)   (x2)--(y2) node[vtx]{} (x1)--(y2)--(x3) (y2)--(y3);  \end{tikzpicture}
\end{center}
\caption{The $6$ different graphs in $C^{\bullet \bullet}$, only red edges are depicted.}\label{fig-C**}
\end{figure}
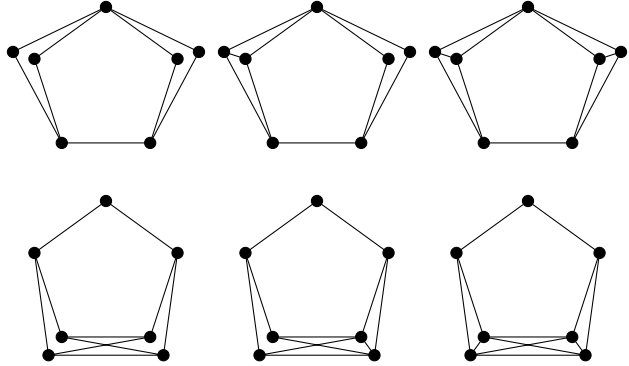

We bound $C(G)$ in terms of $C^{\bullet \bullet}(G)$ using the flag algebra method. We defer the proof of this key lemma to Section~\ref{sec:lem}.
%

\begin{lem}\label{funky}
	For every graph $G$ with $C(G^*) > 0.03$,
	\[ C^{\bullet \bullet}(G^*) \ge -0.175431374077117 + 8.75407592662244 \, C(G^*) .\]
\end{lem}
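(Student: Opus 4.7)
The plan is to establish this linear inequality via the flag algebra method of Razborov. Since $G^*$ is by construction a graphon (the blow-up limit of $G$), any flag algebra inequality valid for every graph limit applies to it. Thus, the task reduces to producing a semidefinite certificate showing that
\[
C^{\bullet \bullet} - 8.75407592662244\, C + 0.175431374077117 \ge 0
\]
holds for every graphon, under the mild side condition that $C > 0.03$. The certificate comes out of a semidefinite program whose solution must then be rounded to an exact inequality.

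First, I would fix a working order $N$ for the atoms of the flag algebra (most likely $N = 7$, since $C^{\bullet \bullet}$ lives on $7$ vertices, matching the enumeration already carried out in the paper) and expand both $C(G^*)$ and $C^{\bullet \bullet}(G^*)$ as $\mathbb{Q}$-linear combinations of induced densities of the unlabeled $N$-vertex graphs. This turns the target into a single linear inequality whose coefficients are indexed by those $N$-vertex graphs. Next, I would enumerate a family of types $\sigma$ (fully labeled graphs on $k$ vertices for several small $k$ of the correct parity) and, for each type, the set of $\sigma$-flags on $(N+k)/2$ vertices, which will form the bases for the semidefinite matrices.

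The SDP then asks for positive semidefinite matrices $M_\sigma$, one per type, and a nonnegative slack, such that the unlabeling and averaging of $\sum_\sigma \langle M_\sigma, f_\sigma f_\sigma^{\top}\rangle$, rewritten in the basis of $N$-vertex densities, is coefficient-wise at most $C^{\bullet \bullet} - 8.75407\ldots\, C + 0.17543\ldots$; the objective maximizes the intercept for a fixed slope or parametrizes the slope. I would solve this numerically (with \textsc{csdp} or \textsc{sdpa}), using the known extremal graphon (the iterated balanced blow-up of $C_5$, density $1/26$) to predict the tight supporting hyperplane and guide the choice of slope. The hypothesis $C(G^*) > 0.03$ fits naturally here: for $C \le 0.02$ the right-hand side of the target is already nonpositive and the inequality is vacuous, so the $0.03$ threshold lets the SDP concentrate its certificate on the interesting regime near the extremum $1/26$.

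The main obstacle is the rounding step. The raw SDP output consists of floating-point matrices that satisfy the inequality only approximately, and converting them into an exact rational certificate without destroying positive semidefiniteness is delicate. The standard remedy is to solve to high precision, round entries to rationals with small denominators, and restore positive semidefiniteness by adding a small multiple of the identity; the resulting exact linear combination is then verified symbolically against the target coefficient vector. A secondary difficulty is simply the size of the certificate, which the paper already notes is too large to print; the full construction, together with the verification scripts, is deferred to Section~\ref{sec:lem} and the supplementary files at \oururl.
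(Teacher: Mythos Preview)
Your approach is essentially the paper's: a flag algebra semidefinite certificate, solved numerically and then rounded to an exact rational solution. Two implementation details differ from the paper and are worth correcting. First, the paper works at order $\ell=8$, not $7$; the enumeration of all graphs on up to $8$ vertices mentioned earlier in the argument is precisely this step, and the specific constants in the lemma come from that computation, so your guess of $N=7$ is likely insufficient. Second, the paper does not fix the slope in advance and certify the single linear inequality. Instead it adds the side constraint $\varphi(C_5)\ge 0.03$ to the SDP with a nonnegative multiplier $s$, optimizes, and then observes that the resulting certificate actually proves $\varphi(C^{\bullet\bullet})\ge \min_F(c_F-a_F-e_F-sb_F)+s\,\varphi(C_5)$ for every $\varphi$; the slope $8.754\ldots$ is exactly the optimized value of $s$ and the intercept $-0.175\ldots$ is the minimum. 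So the hypothesis $C(G^*)>0.03$ is not merely there to make the conclusion non-vacuous---it is the constraint whose dual variable produces the slope. Apart from these two points, your outline (including the rounding discussion) matches the paper.
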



Assume from now on that $G$ is extremal, i.e. $G$ maximizes the number of induced $5$-cycles over all graphs on $n$ vertices. In particular, $C(G^*)\ge C(n^*)$. We compute $C(n^*)$ explicitly for $n< 100$, and observe that $C(n^*)>0.03$. For $n\ge 100$, we have
\[
C(n^*)>\left\lfloor\frac{n}{5}\right\rfloor^5\frac{5!}{n^5}\ge 5!\left(\frac{n-4}{5n}\right)^5\ge 5!\left( \frac{96}{500}\right)^5>0.031,
\]
so Lemma~\ref{funky} applies to $G$. Our goal is to show that the top level of $G$ is a blow-up of $C_5$, i.e. $V(G)$ can be partitioned into five non-empty parts $X_1,X_2,X_3,X_4,X_5$, 
such that all edges between $X_i$ and $X_j$ are red if $|i-j|\in \{1,4\}$, and blue if $|i-j|\in\{2,3\}$. Towards this, for any partition $V(G)=X_1\cup X_2\cup X_3\cup X_4\cup X_5$, call an edge {\em funky} if it has the wrong color according to this partition. We will denote the set of funky edges by $E_f$, and the number of funky edges incident to a vertex $v$ by $d_f(v)$. Let $x_i:=\frac1n |X_i|$ be the normalized sizes of the parts, and let $f{n\choose 2}=|E_f|$ be the number of funky edges. A partition is more desirable if it contains more edges between different parts which are not funky. Note that our desired balanced partition maximizes this quantity for a given $n$. While we cannot guarantee this perfect partition at this point, we can show a lower bound.

\begin{lem}\label{numFunky}
	There exists some partition of $V(G)$ into $X_1,\dots,X_5$ such that,
	
	\[  \sum_{1\le i < j \le 5} x_i x_j - \frac{{n \choose 2}}{n^2} f \ge \frac{2(-0.175431374077117 + 8.75407592662244 \, C(n^*))}{21 \, C(n^*)} . \] 

\end{lem}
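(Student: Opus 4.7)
The plan is to obtain the desired partition from a well-chosen induced $5$-cycle in $G^*$, via a double-counting identity together with Lemma~\ref{funky}. For each $X = \{v_1, \ldots, v_5\} \in C_5(G^*)$, I partition $V(G^*)$ into five classes $Y_1, \ldots, Y_5$ by sending each $u$ to the unique part $i$ such that the edges $uv_j$ ($j \ne i$) match the pattern of position $i$ in a $C_5$-blowup; vertices with no consistent pattern go into $Y_1$. The key observation is that a pair $\{u, w\} \subseteq V(G^*) \setminus X$ with both $u, w$ classified to distinct parts and edge $uw$ correctly colored is equivalent to $X \cup \{u, w\}$ lying in $C^{\bullet \bullet}$; and each $7$-subset in $C^{\bullet \bullet}$ yields exactly $\prod_i |P_i| = 1^3 \cdot 2^2 = 4$ such $(X, \{u, w\})$ pairs. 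Writing $M(X)$ for the number of cross-part correctly-colored pairs under the partition and averaging over $X$, one obtains (in the limit $k \to \infty$)
\[
\mathbb{E}_X\!\left[\frac{M(X)}{N^2}\right] \;\longrightarrow\; \frac{2\,C^{\bullet \bullet}(G^*)}{21\,C(G^*)},
\]
where $N = |V(G^*)|$.

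I then restrict to \emph{good} $X$'s: those whose five vertices lie in five distinct blow-up classes $G_{u_1}, \ldots, G_{u_5}$ with $u_1 u_2 u_3 u_4 u_5$ inducing a $C_5$ in $G$. A short computation using~\eqref{eq:limit} shows that good $X$'s form a fraction $1 - 1/(26\, n^5 C(G^*))$ of $C_5(G^*)$ — essentially $1$ for $n \ge 9$ — so I can select a good $X$ with $M(X)/N^2$ at least the average. For such $X$, each $Y_i$ equals a union of blow-up classes, $\bigcup_{v \in V(G) : \hat t(v) = i} G_v$, where $\hat t$ is the type map on $V(G)$ extended by $\hat t(u_i) = i$ and with unclassified vertices placed in $\hat t^{-1}(1)$. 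This descends canonically to a $5$-partition $X_1, \ldots, X_5$ of $V(G)$ with $|X_i|/n = |Y_i|/N$. Since any cross-part pair in $V(G^*)$ must use two distinct blow-up classes (vertices in the same class share a type) and since inter-class edge colors are inherited from $G$, one checks that $M(X)/N^2$ equals exactly $\sum_{i<j} x_i x_j - \frac{\binom{n}{2}}{n^2}\, f$ for this $V(G)$-partition.

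Finally, Lemma~\ref{funky} applies since $C(G^*) \ge C(n^*) > 0.03$, giving $C^{\bullet \bullet}(G^*) \ge -A + B\,C(G^*)$ with $A = 0.175431374077117$ and $B = 8.75407592662244$. The map $t \mapsto (-A + Bt)/t = B - A/t$ is strictly increasing for $t > 0$, so replacing $C(G^*)$ by the smaller value $C(n^*)$ (using the extremality of $G$) only weakens the bound, yielding the required inequality. The main technical obstacle is rigorously controlling the $o_k(1)$ corrections — notably the contributions from pairs touching $X$ in the double count, from the unclassified class (which merges into part $1$, so can only add correctly-colored cross pairs to $M$), and from the asymptotic dominance of good $X$'s in $C_5(G^*)$ — but each of these reduces to a routine calculation using~\eqref{eq:blowup} and~\eqref{eq:limit}.
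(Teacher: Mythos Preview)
Your approach follows the paper's almost exactly: both arguments read off the partition from a well-chosen induced $C_5$ in $G^*$, use the identification between $C^{\bullet\bullet}$-extensions of that $C_5$ and non-funky cross-part pairs, invoke Lemma~\ref{funky}, and finish with the monotonicity of $t\mapsto B-A/t$ together with $C(G^*)\ge C(n^*)$. Your computation $M(X)/N^2=\sum_{i<j}x_ix_j-\tfrac{\binom{n}{2}}{n^2}f$ for a transversal $X$ is correct, as is the double count giving $\mathbb{E}_X\bigl[M'(X)/N^2\bigr]\to \tfrac{2\,C^{\bullet\bullet}(G^*)}{21\,C(G^*)}$.

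The one genuine gap is the selection step. From ``good $X$'s form a fraction $1-1/(26\,n^{4}\,C(G^*))$'' (note: $n^4$, not $n^5$) you cannot conclude that some good $X$ has $M(X)/N^2$ at least the \emph{global} average. Nothing rules out the tiny set of bad $X$'s pulling the average up; in fact for a bad $X\subset G_v$ every vertex of $V(G^*)\setminus G_v$ is unclassified and hence lands in $Y_1$, so $M(X)/N^2$ can be as large as order $1/n$. The shortfall this causes is of order $1/n^4$, which is fixed in $n$ and does \emph{not} vanish as $k\to\infty$; it is not one of the ``$o_k(1)$ corrections'' you list.

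The paper sidesteps this cleanly by taking $Z$ to \emph{maximize} $C^{\bullet\bullet}_Z(G^*)$ over all induced $C_5$'s and then showing directly that the maximizer must be transversal: if $Z\subset G_v$ then $C^{\bullet\bullet}_Z(G^*)\le 1/n^2$, whereas any $Z$ hitting five blow-up classes whose base vertices form a $C_5$ in $G$ already satisfies $C^{\bullet\bullet}_Z(G^*)\ge 20/n^2$. Once the maximizer is known to be good, ``maximum $\ge$ average'' gives the stated inequality exactly, with no error term. Replacing your averaging step by this maximizer argument fixes your proof.
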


\begin{proof}
Let $Z$ be a set of five vertices in $V(G^*)$ inducing a $C_5$ such that  $C^{\bullet \bullet}_Z(G^*)$ is maximized. As $C_5$ is not a blow-up of any graph $H$ with $2\le |H|\le 4$, there are two cases to consider. Either $Z\subset G_v$ for some $v\in G$, or $|Z\cap G_v|\le 1$ for all $v\in V(G)$, and the vertices $v\in V(G)$ with $|Z\cap G_v|=1$ induce a $C_5$ in $G$. We claim the later is true.

If $Z\subset G_v$, then any vertex set $Y$ such that $Y\cup Z$ induces a graph in $C^{\bullet \bullet}$ must also be in $G_v$. Thus, $C^{\bullet \bullet}_Z(G^*)\le \frac1{n^2}$. On the other hand, as $G$ contains $5$-cycles, we can find a $Z$ with $|Z\cap G_{v_i}|=1$ for $1\le i\le 5$, and $v_1v_2v_3v_4v_5v_1$ an induced $5$-cycle in $G$. Then $Y\cup Z$ induces a graph in $C^{\bullet \bullet}$ for any choice of $Y$ intersecting exactly two of the $G_{v_i}$, and thus  $C^{\bullet \bullet}_Z(G^*)\ge \frac{20}{n^2}$, proving that $Z \not\subset G_v$ for any $v$.
	
	As $Z$ maximizes $C^{\bullet \bullet}_Z(G^*)$, we know that $C^{\bullet \bullet}_Z(G^*)$ is greater than or equal to the average over all sets inducing a $5$-cycle in $G^*$. For any graph in $C^{\bullet \bullet}$, exactly $4$ of the $21$ subgraphs on $5$ vertices are $5$-cycles. Therefore, 
	\begin{align*} C^{\bullet \bullet}_Z(G^*) &\ge \frac{4 \, C^{\bullet \bullet}(G^*)}{21 \, C(G^*)} \\
	&\ge \frac{4 ( -0.175431374077117 + 8.75407592662244 \,C(G^*))}{21 \,C(G^*)} \text{\hskip 2em by Lemma \ref{funky},}\\
	&\ge \frac{4 ( -0.175431374077117 + 8.75407592662244 \,C(n^*))}{21 \,C(n^*)}, 
	\end{align*}
where the last inequality is true since $C(G^*)\ge C(n^*)$, and the function is monotone increasing.	
	
Now partition $V(G) = X_1 \cup \dots \cup X_5 $ according to $Z$, that is, if $v\in V(G)$ and $\{v_1,v_2,v_3,v_4,v_5\}\setminus \{v_i\}\cup \{v\}$ is a $5$-cycle, then $v\in X_i$. Note that this rule assigns $v$ to at most one $X_i$. The remaining vertices are assigned to the $X_i$ arbitrarily. Observe that for $v^*\in G_v$, $w^*\in G_w$, $Z\cup \{v^*,w^*\}$ induces in $G^*$ a graph in $C^{\bullet \bullet}$ if and only if both $v$ and $w$ are assigned to different $X_i$ by the rule, and the edge $vw$ is not funky. 
Therefore,
\begin{align*}
\frac{ \sum_{i \ne j } |X_i|\,|X_j| - 2{n \choose 2} f}{n^2} \ge C^{\bullet \bullet}_Z(G^*),
\end{align*}
and the lemma follows.
\end{proof}


The following technical lemma is helpful in creating the mathematical programs used in some of the remaining claims.

\begin{lem}\label{lem}
	Let $G$ be a graph on $n$ vertices, and let $X\subset V(G)$.
	\begin{enumerate}
		\item If $|X|=1$, then $X=\{x\}$ is contained in at most $\frac{r^2b^2}{16}\le \left( \frac{n-1}{4}\right)^4$ copies of an induced $C_5$, where $r$ and $b$ are the numbers of red and blue neighbors of $x$, respectively.
		\item If $|X|=2$, then $X$ is contained in at most $\left( \frac{n-2}{3}\right)^3$ copies of an induced  $C_5$.
		\item If $|X|=3$, then $X$ is contained in at most $\left( \frac{n-3}{2}\right)^2$ copies of an induced  $C_5$.
	\end{enumerate}
\end{lem}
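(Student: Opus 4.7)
My plan is a case analysis on $|X|$ that exploits the rigidity of $C_5$: once $X$ is placed in a copy of $C_5$ (up to the automorphisms of $C_5$ fixing $X$ setwise), each of the remaining $5-|X|$ positions is forced by its prescribed red/blue adjacencies to $X$ to lie in a candidate set $S_i\subseteq V(G)\setminus X$, and the number of completions is at most $\prod_i|S_i|$, which I control by AM--GM.

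For $|X|=3$, up to relabelling $X$ can induce either a red $P_3$ (three consecutive $C_5$-vertices) or a $P_2\cup K_1$ in the red graph (two adjacent plus the opposite one). In either case there are two missing slots $y,z$, and I check directly that some $a\in X$ is adjacent in the $C_5$ to one of the two slots and not to the other, forcing one candidate set into $N_r(a)$ and the other into $N_b(a)$. This yields $S_y\cap S_z=\emptyset$ as subsets of $V(G)\setminus X$, so $|S_y|+|S_z|\le n-3$ and $|S_y|\,|S_z|\le\bigl((n-3)/2\bigr)^2$ by AM--GM. The $|X|=2$ case uses the same template: whether $X$ induces a red or blue edge, the three missing-slot candidate sets are pairwise disjoint (again by a color conflict at some $a\in X$ that separates any two slots), giving $\sum|S_i|\le n-2$ and $\prod|S_i|\le\bigl((n-2)/3\bigr)^3$ by AM--GM.

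The $|X|=1$ case is more delicate because with $X=\{x\}$ the four candidate sets split as $S_{v_2},S_{v_5}\subseteq N_r(x)$ and $S_{v_3},S_{v_4}\subseteq N_b(x)$, so no vertex of $X$ separates two candidate sets that share the same neighbourhood. I plan to first fix the red edge $\{v_3,v_4\}\subseteq N_b(x)$ of the $C_5$ and apply the previous disjointness argument to $v_2,v_5$: since $v_2$ must be red to $v_3$ and blue to $v_4$ while $v_5$ must be blue to $v_3$ and red to $v_4$, the sets $S_{v_2}$ and $S_{v_5}$ are disjoint subsets of $N_r(x)$, and AM--GM gives $|S_{v_2}|\,|S_{v_5}|\le(r/2)^2$. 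Summing over red edges $\{v_3,v_4\}$ in $N_b(x)$ and combining with the symmetric bound obtained by first fixing the blue edge $\{v_2,v_5\}\subseteq N_r(x)$---which amounts to counting the alternating $K_{2,2}$s that each induced $C_5$ through $x$ produces between $N_r(x)$ and $N_b(x)$---yields $\#\{C_5\ni x\}\le(r/2)^2(b/2)^2=r^2b^2/16$. The second inequality $r^2b^2/16\le\bigl((n-1)/4\bigr)^4$ is just AM--GM applied to $r+b=n-1$.

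The main obstacle is extracting the factor $1/16$ in Part~(1): a naive labelling of $(v_2,v_3,v_4,v_5)$ would only give a bound of $r^2b^2/2$, so one must exploit the more subtle alternating-matching structure forced by the self-complementary nature of $C_5$ on the bipartite colouring between $N_r(x)$ and $N_b(x)$ in order to recover the missing factor of eight.
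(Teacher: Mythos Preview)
Your arguments for Parts~2 and~3 are correct and coincide with the paper's: once the positions of $X$ on the $C_5$ are fixed, the colour profile to $X$ forces each remaining slot into a distinct class, so the candidate sets partition a subset of $V(G)\setminus X$ and AM--GM finishes.

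For Part~1, however, there is a genuine gap at exactly the point you flag as ``the main obstacle''. After fixing a red edge $\{v_3,v_4\}\subseteq N_b(x)$, you apply AM--GM to $|S_{v_2}|+|S_{v_5}|\le r$ to get $|S_{v_2}|\,|S_{v_5}|\le (r/2)^2$, and then sum over the at most $\binom{b}{2}$ red edges in $N_b(x)$. This yields only $(r^2/4)\binom{b}{2}\le r^2b^2/8$. The ``symmetric bound'' from fixing $\{v_2,v_5\}$ first gives the same $r^2b^2/8$, and there is no way to combine two upper bounds of size $r^2b^2/8$ into one of size $r^2b^2/16$: if all edges inside $N_b(x)$ are red and all edges inside $N_r(x)$ are blue, both bounds are simultaneously near $r^2b^2/8$, so the minimum of the two does not help. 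Your parenthetical remark that this ``amounts to counting the alternating $K_{2,2}$s'' is the right object to count --- every $C_5$ through $x$ does give such a configuration --- but you never actually bound the number of alternating $K_{2,2}$s by $r^2b^2/16$; you only assert it.

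The paper's fix is to \emph{not} apply AM--GM against the crude bound $r$ at that stage. Working from the other side, for a pair $u,v\in N_r(x)$ the number of completions is at most $p\,q$ where $p=|\{w:uw\text{ red},vw\text{ blue}\}|$, $q=|\{w:uw\text{ blue},vw\text{ red}\}|$ and $p+q=h(u,v)$ is the Hamming distance between the colour-vectors of $u$ and $v$ over $N_b(x)$. Thus the count is at most $\tfrac14\sum_{\{u,v\}}h(u,v)^2$. Now the key extra step (which your proposal is missing) is the chain
\[
\sum_{\{u,v\}}h(u,v)^2\le \bigl(\max h\bigr)\sum_{\{u,v\}}h(u,v)\le b\sum_{w\in N_b(x)}\rho_w(r-\rho_w)\le b\cdot b\cdot\frac{r^2}{4},
\]
where $\rho_w$ is the number of red neighbours of $w$ in $N_r(x)$ and the middle equality is the identity $\sum_{\{u,v\}}h(u,v)=\sum_w\rho_w(r-\rho_w)$. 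This is what produces the extra factor of $2$ you are missing: one factor of $b$ comes from $\max h\le b$, and the other from summing $\rho_w(r-\rho_w)\le r^2/4$ over $b$ terms. Your premature AM--GM collapses $h(u,v)$ to $r$ (or $b$) before summing and thereby loses this refinement.
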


\begin{proof}
	To see the second and third statement, notice that the edges in $X$, and the edges from any vertex in $V(G) - X$ to $X$ completely determine where on a $C_5$ that vertex can lie, or if it can lie on a $C_5$ at all. For instance, if $X = \{w_1,w_2\}$, $w_1w_2$ is red, and $w_1w_2w_3w_4w_5w_1$ is a red cycle, then for each $w_i$, $3\le i\le 5$, the colors of $(w_1w_i,w_2w_i)$ are different. Therefore we can maximize the number of $5$-cycles by partitioning the vertices in $V(G)\setminus X$ into two (or three) equal classes with the edges colored these ways. 
	
	To see the first statement, notice that every $C_5$ containing $x$ has exactly two red and two blue neighbors of $x$. 
	For every red neighbor $v$ and blue neighbor $w$, let 
	\[
	a(v,w)=\begin{cases} 1,&\mbox{ if $vw$ is red},\\ 0,&\mbox{ if $vw$ is blue}.\end{cases}
	\]
	Denote $|a(.,w)|$ as the number of ones in $a(.,w)$, that is the number of red neighbors shared between $w$ and $x$. For $u,v$ red neighbors of $x$, let $h(u,v)$ be the Hamming distance of the two vectors $a(u,.),a(v,.)\in \{0,1\}^b$, that is the number of coordinates where $a(u,.)$ and $a(v,.)$ differ. This quantity is important as every $C_5$ containing $\{x,u,v\}$ must contain one vertex $w$ with $a(u,w)=1-a(v,w)=0$ and one vertex $y$ with
	$a(u,y)=1-a(v,y)=1$. In particular, there can be at most $\frac{h(u,v)^2}{4}$ $5$-cycles containing $\{x,u,v\}$. Therefore the number of $5$-cycles is at most
	\begin{align*} 
		\frac{1}{4} \sum_{xu, xv \text{ red }} h(u,v)^2 & \le \frac{\max_{xu, xv \text{ red }} h(u,v)}{4} \sum_{xu, xv \text{ red }} h(u,v) \\
		& \le \frac{b}{4} \sum_{xu, xv \text{ red }} h(u,v) \\
		&= \frac{b}{4} \sum_{xw \text{ blue }} |a(.,w)|(r - |a(.,w)|) \\
		&\le \frac{b^2r^2}{16}.
	\end{align*}

\end{proof}

We are now ready to show that in a partition $V(G)=X_1\cup X_2\cup X_3\cup X_4\cup X_5$ maximizing the number of non-funky edges between parts, there are no funky edges.
We split the argument into two parts, depending on the size of $n$. 

\begin{case} \label{medium}
	$9 \le n \le 1000$:
\end{case}

	We first change the color of all funky edges to create a graph $G_1$ without funky edges, where we also change the graphs inside the $X_i$ to iterated balanced blow-ups of $C_5$. The density of $5$-cycles in $G_1$ is then easily calculated as
	\[
	C(G_1)= \frac{120x_1x_2x_3x_4x_5n^5+\sum_i x_in(x_in-1)(x_in-2)(x_in-3)(x_in-4)C(x_in)}{n(n-1)(n-2)(n-3)(n-4)}.
	\]
	
	Furthermore, we provide generous bounds on the number of $5$-cycles created and destroyed going from $G$ to $G_1$ (see Claims \ref{count1}, \ref{count2}, and \ref{count3}). This together with the number of cycles in $G_1$ allows us to bound the number of $5$-cycles in $G$ without directly counting them. 
	
	We then create an integer program $(P)$, for a fixed number of vertices, with an objective function of the difference between the bound on the number of $5$-cycles in $G$ discussed above and the number $C(n)$ of $5$-cycles in the balanced iterated blow-up on the same number of vertices. We then iterate through all possible sizes of the $X_i$ for $9$ to $1000$ vertices. In this way, the program yields a contradiction for most choices of the $X_i$. The few remaining cases only appear on a relatively small number of vertices. This allows us to check these cases by a brute force method. 
	
	To create our program $(P)$, let $y_1, \dots, y_5$ be a permutation of the $x_i$'s such that $y_1 \ge \dots \ge y_5$. 
Recall that $f  := |E_f|/\binom{n}{2}$ is the scaled number of funky edges.	
	If $f=0$, we are done, so assume that $f>0$. Let
	\[
	d=\frac{1}{f{n\choose 2}n}\sum_{xy\in E_f}(d_f(x)+d_f(y)-2)
	\]
	be the average number of funky edges incident to a funky edge, divided by $n$. 
	
	\begin{claim}\label{count1}
		The graph $G$ contains at most
		\[\frac12 f{n\choose 2}\left(f{n\choose 2}-dn-1 \right)\left(\left( y_1+y_2+\frac12(y_3+y_4+y_5)\right)n-2\right)\]
		$5$-cycles which contain at least two non-incident funky edges. 
	\end{claim}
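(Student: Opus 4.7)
The plan is a two-stage counting argument: first count the unordered pairs of non-incident funky edges, then for each such pair bound the number of 5-cycles containing both.

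For the first count, observe that for each vertex $v$, the number of unordered pairs of funky edges meeting at $v$ is $\binom{d_f(v)}{2}$, and since two distinct funky edges share at most one vertex, $\sum_v \binom{d_f(v)}{2}$ counts each unordered incident pair of funky edges exactly once. The identity $\sum_v d_f(v)(d_f(v)-1)=\sum_{xy\in E_f}(d_f(x)+d_f(y)-2)$ together with the definition of $d$ gives $\sum_v \binom{d_f(v)}{2}=\tfrac12 d n f\binom{n}{2}$. Subtracting from $\binom{f\binom{n}{2}}{2}$ yields the first factor $\tfrac12 f\binom{n}{2}(f\binom{n}{2}-dn-1)$ in the claimed bound.

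For the second stage, fix a non-incident pair $\{e_1,e_2\}=\{uv,xy\}$ of funky edges with endpoint set $A=\{u,v,x,y\}$. Every 5-cycle containing both has the form $A\cup\{w\}$, and for $G[A\cup\{w\}]$ to induce $C_5$ we need $G[A]$ to be an induced $P_4$ with $uv,xy$ as its two end-edges, with $w$ then forced into a specific two-blue-two-red adjacency pattern to $A$. I plan to bound the number of valid $w$ by partitioning candidates according to which part $X_i$ contains them. By the blow-up-of-$C_5$ structure of the partition, at most two of the five parts produce a partition-predicted adjacency pattern from $w$ to $A$ matching the required pattern; each such compatible part contributes at most its full size (minus any endpoints of $A$ it contains). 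In each of the other three parts, $w$ must carry at least one funky edge to $A$, and a short double count of funky edges incident to $A$ shows at most half of each such part contributes. Taking the compatible parts pessimistically to be $Y_1$ and $Y_2$ yields the bound $|Y_1|+|Y_2|+\tfrac12(|Y_3|+|Y_4|+|Y_5|)-2=(y_1+y_2+\tfrac12(y_3+y_4+y_5))n-2$, where the $-2$ accounts for the two endpoints of $A$ necessarily lying in the compatible parts.

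Combining the two stages and noting that each 5-cycle containing at least two non-incident funky edges contributes at least one (pair, 5-cycle) incidence yields the claimed upper bound. The main obstacle is the second-stage bound on the number of admissible $w$, specifically justifying the claims that at most two parts are compatible and that at most half of each remaining part contributes; both require a short case analysis based on how $A$ distributes over the five parts, combined with the color constraints imposed by the target $C_5$ structure.
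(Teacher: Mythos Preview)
Your first-stage count of non-incident funky pairs is correct and matches the paper exactly. The structural observation that the four endpoints $A$ must induce a $P_4$, and that at most two of the five parts are ``compatible'' with the required adjacency pattern of the fifth vertex, is also right and is what the paper does.

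The gap is in your justification of the factor $\tfrac12$ for the three non-compatible parts. You write that ``a short double count of funky edges incident to $A$ shows at most half of each such part contributes.'' This does not work: the number of funky edges from $A$ into a given part $X_i$ is unrelated to $|X_i|/2$, and nothing prevents \emph{every} vertex of a non-compatible part from having a funky edge to $A$ and thus being a valid fifth vertex. You cannot bound the number of admissible $w$ in a non-compatible part by half its size directly.

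The paper obtains the $\tfrac12$ by a different mechanism, namely overcounting across the outer sum over pairs. If the fifth vertex $v$ lies in a non-compatible part, then at least one edge from $v$ to $A$ is funky; this funky edge together with whichever of $e_1,e_2$ it is disjoint from forms a second pair of non-incident funky edges inside the same $C_5$. Hence that $C_5$ is counted at least twice in the sum over all non-incident funky pairs, so it may be weighted by $\tfrac12$ per appearance. This is where the $\tfrac12$ on $y_3+y_4+y_5$ actually comes from. Your final sentence, ``each 5-cycle \ldots contributes at least one (pair, 5-cycle) incidence,'' uses only the trivial multiplicity $\ge 1$ and would yield the bound without the $\tfrac12$; you need multiplicity $\ge 2$ in the non-compatible case, which is exactly what the funky-edge-to-$A$ observation provides once interpreted correctly.
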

	
	\begin{proof}
		
		Pick two non-incident funky edges. In other words, we pick a funky edge, and then pick another funky edge not incident to the first one, and then multiply this count by $\frac12$ because we counted every pair of edges twice. We can do this in
		\begin{equation}\label{noninc}
		\frac12\sum_{xy\in E_f}\left(f{n\choose 2}-d_f(x)-d_f(y)+1\right)=\frac12 f{n\choose 2}\left(f{n\choose 2}-dn-1 \right)
		\end{equation}
		ways, where the ``$+1$" comes from double counting the edge $xy$ in both $d_f(x)$ and $d_f(y)$. 
		
		The four vertices, let us call them $\{w,x,y,z\}$, spanning the pair of funky edges  must induce a red (and a blue) $P_4$, as otherwise they cannot induce a $C_5$ with a fifth vertex. Without loss of generality assume $wx,xy,yz$ are the red edges inducing the $P_4$. To count the $5$-cycles we must then pick a $5^{\mathrm{th}}$ vertex (call this vertex $v$) such that $vw$ and $vz$ are red, and $vx$ and $vy$ are blue. Note that with the proper combination of funky, non-funky, and edges within the $X_i$s, $v$ can be an element of any $X_i$. However, if any edge between $v$ and $\{w,x,y,z\}$ is funky, then this $C_5$ contains at least two pairs of non-incident funky edges. As a consequence, our counting strategy of first choosing a pair of funky edges, and then adding a fifth vertex, will count this $5$-cycle at least twice. To make up for this, we can add a factor of $\frac12$ to the number of such $5$-cycles. Therefore, in order to prove the claim, it suffices to show that no matter the location of $\{w,x,y,z\}$, there are at most two sets $X_i$, such that we can have $v\in X_i$ and no funky edge between $v$ and $\{w,x,y,z\}$.
		

If $wx$ is funky, we may assume by symmetry that $w \in X_1$ and $x \in X_3$. In this case the only two sets where $v$ may lie so 
that neither the red edge $vw$ nor the blue edge $vx$ is funky,
are $X_1$ and $X_5$. Similarly if $xw$ is not funky we may assume by symmetry that $x \in X_1, w \in X_2$. In this case the only sets that $v$ can be in so that neither $vw$ nor $vx$ are funky are $X_2$ and $X_5$. 

Hence the number of choices for $v$ to complete the $C_5$ is at most 
\[
\left( y_1+y_2+\frac12(y_3+y_4+y_5)\right)n-2,
\]
where $-2$ comes from  $v \not\in \{w,x,y,z\}$.
Multiplying this with \eqref{noninc} finishes the proof of the claim. 
	\end{proof}
	
	\begin{claim}\label{count2} The graph $G$ contains at most
		\[ \frac{9}{32}(dn+2)f{n\choose 2}y_1^2 n^2 \]
		$5$-cycles with at least one funky edge, but without two non-incident funky edges. 
	\end{claim}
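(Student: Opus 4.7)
I would begin by analyzing the possible configurations of funky edges in such a $5$-cycle. Two edges in the $K_5$ spanned by the cycle are non-incident exactly when they share no vertex, so the hypothesis ``no two non-incident funky edges'' forces the funky edges to be pairwise incident. In $K_5$ the only configurations of pairwise-incident edges are stars $K_{1,m}$ (meeting at a single ``apex'' vertex) and, if there are exactly three edges, the triangle $K_3$. Each bad $5$-cycle thus carries one of these two rigid structures.

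I would then parametrize the bad cycles by ordered triples $(v, e_1, e_2)$ with $v \in V(G)$ and $e_1, e_2 \in E_f$ both incident to $v$ (possibly $e_1 = e_2$). The total number of such parameterizations is
\[
\sum_{v \in V(G)} d_f(v)^2 \;=\; \sum_{vw \in E_f}\bigl(d_f(v)+d_f(w)\bigr) \;=\; (dn+2)\, f\binom{n}{2}.
\]
For each parameterization $(v, e_1 = v w_1, e_2 = v w_2)$ I would count the bad $5$-cycles whose vertex set contains $\{v, w_1, w_2\}$ with both $e_1$ and $e_2$ appearing in the induced cycle. A short case analysis of the star and triangle structures above shows that every bad cycle is counted at least twice by this scheme, so it suffices to bound each count $N(v, e_1, e_2)$ by $\tfrac{9}{16}\, y_1^2 n^2$. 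Summing and dividing by $2$ then yields the claim.

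The technical heart of the proof lies in the per-parameterization bound. With $v, w_1, w_2$ fixed, the remaining cycle vertices $a, b$ (together with an additional cycle-neighbor $x$ of $v$ when $w_1 = w_2$) are constrained by the non-funky conditions on the cycle edges and chords disjoint from $v$: these four vertices induce a red $P_4$ and a complementary blue $P_4$ (as in Claim~\ref{count1}), and the non-funky requirement restricts each of them to lie in very few parts of $\{X_1, \dots, X_5\}$, each of size at most $y_1 n$. A Hamming-distance optimization analogous to the proof of Lemma~\ref{lem}(1), applied to the bipartite split of red versus blue neighbors in the appropriate positions, sharpens the resulting count to $\tfrac{9}{16}(y_1 n)^2$.

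The main obstacle is achieving the sharp $\tfrac{9}{16}$ constant in the per-parameterization bound, since the trivial estimates from Lemma~\ref{lem}(2) (giving $\bigl(\tfrac{n-2}{3}\bigr)^3$) and Lemma~\ref{lem}(3) (giving $\bigl(\tfrac{n-3}{2}\bigr)^2$) are loose by order $1/y_1^2$ once the partition is close to balanced (where $y_1 \approx 1/5$). Closing the gap requires carefully enumerating the admissible part-positions of $a, b$ (and $x$) given the specific color structure of the cycle and the non-funky conditions on all edges disjoint from $v$, and then applying the Hamming-style optimization from Lemma~\ref{lem}(1) to convert the combined part-size constraints into the constant $\tfrac{9}{16}$.
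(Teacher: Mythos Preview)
Your high-level setup—classify the funky edges of the bad cycle as a star or triangle, anchor at an apex vertex $v$, and use the identity $\sum_v d_f(v)^2=(dn+2)f\binom{n}{2}$—matches the paper's strategy exactly. But the claimed uniform bound $N(v,e_1,e_2)\le \tfrac{9}{16}\,y_1^2n^2$ is false, and this is where your plan breaks. Take $e_1=vw$ red funky and $e_2=vu$ blue funky (the paper's Case~3). The careful part-by-part placement of the two remaining cycle vertices shows only that they lie in two \emph{specified distinct} parts, giving $N(v,e_1,e_2)\le y_1y_2 n^2$. Since the partition is near-balanced ($y_2/y_1$ close to $1$, certainly $>\tfrac{9}{16}$), this already exceeds $\tfrac{9}{16}y_1^2n^2$. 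The diagonal case $e_1=e_2$ is worse: $N(v,vw,vw)$ counts all bad cycles through the single funky edge $vw$, and for a vertex $v$ with large $d_f(v)$ this is not $O(y_1^2n^2)$ at all—it grows with $d_f(v)$.

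The paper gets the constant $\tfrac{9}{32}$ not from any per-pair bound but by aggregating at each $v$ with a \emph{color split}: when the two funky edges at $v$ are the same color the Hamming argument of Lemma~\ref{lem}(1) forces all four remaining vertices into a single part, giving the much stronger factor $\tfrac{1}{16}$, whereas the mixed-color case gives only the weaker $y_1y_2$. Summing these three contributions yields
\[
\frac{r_f(v)^2}{16}y_1^2n^2+\frac{b_f(v)^2}{16}y_1^2n^2+r_f(v)b_f(v)y_1^2n^2
\;\le\;\frac{9}{32}\,d_f(v)^2 y_1^2 n^2,
\]
the last step being the pointwise inequality $\tfrac{1}{16}(r^2+b^2)+rb\le\tfrac{9}{32}(r+b)^2$ (equivalently $\tfrac{7}{32}(r-b)^2\ge 0$). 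So the $\tfrac{9}{32}$ is not a Hamming constant but the outcome of balancing a tight monochromatic bound against a loose bichromatic one; your proposed route of pushing the Hamming optimization to produce $\tfrac{9}{16}$ per parameterization cannot succeed. To repair your argument you would need to drop the uniform bound, split by the colors of $(e_1,e_2)$, and recover exactly this quadratic combination—at which point you are reproducing the paper's proof (and the diagonal terms become irrelevant since the paper never parameterizes by ordered pairs of edges).
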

	
	\begin{proof}
		Note that no $C_5$ in $G$ can contain exactly one funky edge. If a $C_5$ does not contain two non-incident funky edges, then 
		either all funky edges are incident to a single vertex of the cycle, or there are exactly three funky edges forming a triangle.

		Let $v$ be a vertex incident to at least two funky edges in the $C_5$ we want to count, and say $v \in X_1$. If the funky edges in the $C_5$ we want to count form a triangle, note that this triangle must contain edges of both colors as $C_5$ does not contain a monochromatic triangle. In this case, choose $v$ to be a vertex incident to funky edges of both colors.
		We break the count up into cases based on the colors of funky edges incident to $v$, each of which will correspond to a term in a sum. Illustrations are provided in Figure~\ref{fig:count2}.
		
		Case I: $v$ is incident to at least two red funky edges in the $C_5$, say to vertices $u,w\in X_3\cup X_4$. We know that $u$ and $w$ must be in the same set as otherwise the three vertices induce a red triangle, or $uw$ is funky and we would have chosen a different vertex as $v$. By symmetry say $u,w \in X_{3}$. The other two vertices in a $C_5$ must each have exactly one red and one blue edge to $\{u,w\}$, which, without funky edges not incident to $v$, can only happen if they are also in $X_{3}$. We can then directly apply part $1.$ of Lemma \ref{lem} to count at most $\frac{(r_f(v))^2}{4} \cdot\frac{(y_1n)^2}{4}$ $5$-cycles for each such $v\in V$.
		
		Case II: $v$ has at least two blue funky edges. Similarly to Case I, by applying Lemma \ref{lem} we count at most $\frac{(b_f(v))^2}{4}\cdot\frac{y_1^2n^2}{4}$ $5$-cycles for each such $v \in V$. 
		
		Case III: $v$ has exactly one blue funky edge $vu$ and one funky red edge $vw$. The edge $uw$ may be either funky or not. Then $u,v,w$ are in different sets $X_i$, and they span a red or blue $P_3$. By symmetry, we may assume that it is a red $P_3$ $vwu$, with the red cycle being $vwuxyv$. As $uv$ is funky and blue, we may again assume by symmetry that $u\in X_2$. We then have two subcases. First, subcase IIIa: $w\in X_3$. Then $y\in X_5$ as both $uy$ and $wy$ are blue, and $x\in X_1$ as both $ux$ and $xy$ are red. Similarily we have subcase IIIb: $w\in X_4$. Then $x\in X_1$ as $ux$ is red (so $x\notin X_4\cup X_5$), $vx$ is blue (so $x\notin X_2$), and $wx$ is blue (so $x\notin X_3$). Similarly, $y\in X_2$.
		
%
%

\begin{figure}
\begin{center}
\begin{tikzpicture}[scale=0.8]
\draw[line width=8pt] (0,0) node[regular polygon,regular polygon sides=5,minimum size=4.15cm,draw]{};
\foreach \x in {1,2,3,4,5}{
\draw[fill=gray!50!white](90+\x*72:2)coordinate(x\x) circle(0.8 cm);
\draw(90+\x*72:3.15) node{$X_\x$};
}
\draw[thick]
(x1) node[vtx,label=left:{\color{black}$v$}](v){}
(x3) ++(50:0.3) node[vtx,label=right:{\color{black}$w$}](w){}
(x3) ++(230:0.3) node[vtx,label=below:{\color{black}$u$}](u){}
(x3) ++(290:0.3) node[vtx](a){}
(x3) ++(350:0.3) node[vtx](b){}
(u) -- (v) -- (w)  -- (b) -- (a) -- (u)
;
\draw (0,-3) node{Case I};
\end{tikzpicture}
\hskip 1em
\begin{tikzpicture}[scale=0.8]
\draw[line width=8pt] (0,0) node[regular polygon,regular polygon sides=5,minimum size=4.15cm,draw]{};
\foreach \x in {1,2,3,4,5}{
\draw[fill=gray!50!white](90+\x*72:2)coordinate(x\x) circle(0.8 cm);
\draw(90+\x*72:3.15) node{$X_\x$};
}
\draw[thick]
(x1) node[vtx](v){}
(x2) ++(10:0.3) node[vtx](w){}
(x2) ++(190:0.3) node[vtx](u){}
(x2) ++(250:0.3) node[vtx](a){}
(x2) ++(310:0.3) node[vtx](b){}
(u) -- (v) -- (w)  -- (b) -- (a) -- (u)
;
\draw (0,-3) node{Case II};
\end{tikzpicture}
\vskip 1em
\begin{tikzpicture}[scale=0.8]
\draw[line width=8pt] (0,0) node[regular polygon,regular polygon sides=5,minimum size=4.15cm,draw]{};
\foreach \x in {1,2,3,4,5}{
\draw[fill=gray!50!white](90+\x*72:2)coordinate(x\x) circle(0.8 cm);
\draw(90+\x*72:3.15) node{$X_\x$};
}
\draw[thick]
(x1)++(0.2,0)node[vtx,label=right:$v$](v){}
(x2) node[vtx,label=below:$u$](u){}
(x3)  node[vtx,label=right:$w$](w){}
(x5)  node[vtx,label=above:$y$](y){}
(x1)++(-0.2,0)  node[vtx,label=above:$x$](x){}
(v)--(w)--(u)--(x)--(y)--(v)
;
\draw (0,-3) node{Case IIIa};
\end{tikzpicture}
\hskip 1em
\begin{tikzpicture}[scale=0.8]
\draw[line width=8pt] (0,0) node[regular polygon,regular polygon sides=5,minimum size=4.15cm,draw]{};
\foreach \x in {1,2,3,4,5}{
\draw[fill=gray!50!white](90+\x*72:2)coordinate(x\x) circle(0.8 cm);
\draw(90+\x*72:3.15) node{$X_\x$};
}
\draw[thick]
(x1)++(0.2,0)node[vtx,label=above:$v$](v){}
(x2)++(-0.1,-0.4) node[vtx,label=left:$u$](u){}
(x4)  node[vtx,label=right:$w$](w){}
(x2)++(0,0.1)  node[vtx,label=above right:$y$](y){}
(x1)++(-0.2,0)  node[vtx,label=above:$x$](x){}
(v)--(w)--(u)--(x)--(y)--(v)
;
\draw (0,-3) node{Case IIIb};
\end{tikzpicture}
\end{center} 
\caption{Cases where $v$ is incident with two funky edges from Claim~\ref{count2}. Only red edges are depicted.}\label{fig:count2}
\end{figure}
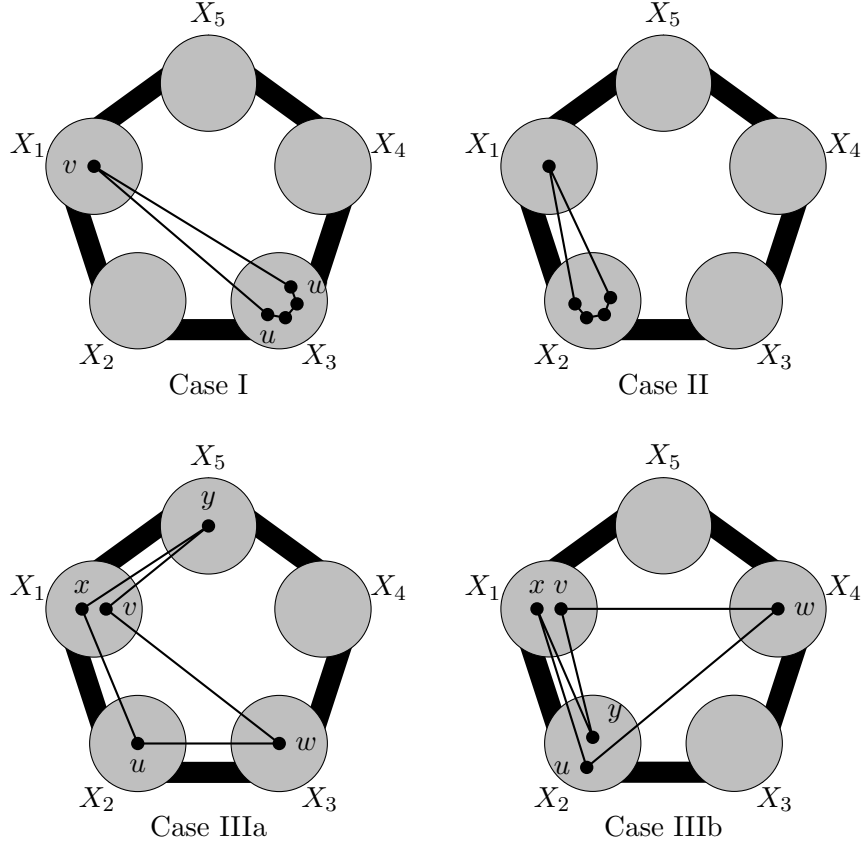
		
		Therefore for any choice of funky edges in this case, the two sets for $x$ and $y$ are determined, and they are different. This gives us an upper bound of $r_f(v)b_f(v)y_1^2n^2$ $5$-cycles of this type containing $v$.
		
		Putting the three cases together, there are at most
		\begin{align*}
		&\sum_{v\in V} \left(  \frac{r_f(v)^2}{4}\frac{y_1^2 n^2}{4}+ \frac{b_f(v)^2}{4}\frac{y_1^2n^2}{4} +r_f(v)b_f(v)y_1^2n^2\right)\\
		&=y_1^2 \sum_{v\in V} \left(\left(  \frac{r_f(v)n}{4}+ \frac{b_f(v)n}{4}\right)^2 +\frac{7}{8}r_f(v)b_f(v)n^2\right)\\
		&\le y_1^2 \sum_{v\in V} \left(\left(  \frac{d_f(v)n}{4}\right)^2+\frac78\left(  \frac{d_f(v)n}{2}\right)^2\right)\\
		&=y_1^2 \sum_{v\in V} \frac{9}{32}\left(  d_f(v)n\right)^2\\
		&=y_1^2 \frac{9}{32}\sum_{vw\in E_f}(d_f(v)+d_f(w))n^2\\
		&=\frac{9}{32}(dn+2)f{n\choose 2}y_1^2n^2
		\end{align*}
		$5$-cycles in $G$ containing funky edges but no pair of non-incident funky edges. 
		
	\end{proof}
	
Now we are counting the new $5$-cycles when switching from $G$ to $G_1$.
	
	\begin{claim}\label{count3}
		
	The graph $G_1$ contains at least 
		\[ f{n\choose 2}n^3\left(  y_3y_4y_5-\frac38 dy_3y_4-\frac18fy_3 \right)  \]
		$5$-cycles whose vertex set spans at least one funky edge in $G$. 
		
	\end{claim}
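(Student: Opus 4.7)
My plan is to apply Bonferroni's inequality to the indicator that a 5-cycle's vertex set contains a funky edge. The crucial geometric fact is that since $C_5$ is not a blow-up of any smaller graph, every induced 5-cycle in $G_1$ either lies inside a single part $X_i$ or has exactly one vertex in each of the five parts; since funky edges are between distinct parts, every 5-cycle whose vertex set spans a funky edge must be of the latter ``rainbow'' type. Because $G_1$ has the clean $C_5$-blow-up structure between parts, every rainbow 5-tuple automatically induces a $C_5$.

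For each funky edge $e$ between parts $X_i$ and $X_j$, the number of rainbow 5-cycles in $G_1$ whose vertex set contains both endpoints of $e$ equals the product of the sizes of the three remaining parts, which is at least $y_3 y_4 y_5 n^3$. Summing over all $f\binom{n}{2}$ funky edges gives the positive Bonferroni term $f\binom{n}{2}n^3 y_3 y_4 y_5$, and the lower bound will be this quantity minus a sum over unordered pairs $\{e_1,e_2\}$ of distinct funky edges of $N_{e_1,e_2}$, the number of 5-cycles containing both $V(e_1)$ and $V(e_2)$.

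I would then split the overcount into contributions from incident and non-incident pairs of funky edges. For an incident pair at a common vertex $v$ whose three endpoints lie in three distinct parts, $N_{e_1,e_2}$ is the product of the sizes of the two remaining parts (and is $0$ otherwise). Mirroring the case analysis in the proof of Claim~\ref{count2}---splitting on whether the two funky edges at $v$ are both red, both blue, or of mixed color, which constrains which parts the non-shared endpoints may lie in---together with AM-GM estimates on the resulting quadratic forms in $d_f(v,X_j)$ and the identity $\sum_v d_f(v)^2 = f\binom{n}{2}(dn+2)$, the total incident contribution should come in at $\tfrac{3}{8}d\, y_3 y_4 \, f\binom{n}{2}n^3$. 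For non-incident pairs whose four endpoints span four distinct parts, $N_{e_1,e_2}$ equals $|X_m|$ for the unique missing part $m$; grouping such pairs by $m$ and bounding the number of disjoint funky-edge pairs that use only the four non-$m$ parts produces the target $\tfrac{1}{8}f\, y_3 \, f\binom{n}{2}n^3$. Combining positive and negative contributions yields the claim.

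The main obstacle is replacing the crude worst-case bounds $y_1 y_2\, n^2$ per incident pair and $y_1\, n$ per non-incident pair by the tighter scales $y_3 y_4$ and $y_3$ appearing in the claim. This hinges on exploiting the color/part constraints on funky edges at each vertex: the color of a funky edge incident to $v\in X_i$ determines to which two of the four other parts its far endpoint can lie, and this in turn controls which pair of parts ends up ``missing'' from the 3-vertex span of an incident pair, so that the relevant product $p_k p_l$ is rarely the maximum $y_1 y_2$. A parallel averaging argument across the five choices of missing part $m$ is what cuts the non-incident bound down from $y_1$ to $y_3$.
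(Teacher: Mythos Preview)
Your Bonferroni setup and the positive term $f\binom{n}{2}y_3y_4y_5n^3$ match the paper exactly. The divergence, and the gaps, are in the two negative terms.

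For incident pairs at $v$, the paper uses no colour casework at all. It simply bounds the number of unordered pairs $\{w,y\}$ of funky neighbours of $v$ lying in two \emph{different} parts by the elementary symmetric function inequality
\[
\sum_{j<k} d_f(v,X_j)\,d_f(v,X_k)\ \le\ \binom{4}{2}\Bigl(\tfrac{d_f(v)}{4}\Bigr)^{2}=\tfrac{3}{8}\,d_f(v)^2,
\]
then multiplies by the product of the two remaining part sizes and sums using $\sum_v d_f(v)^2=\sum_{vw\in E_f}(d_f(v)+d_f(w))$. Your red/blue split only encodes whether the far endpoint of a funky edge at $v$ lies in an ``adjacent'' part ($|i-j|\in\{1,4\}$) or a ``non-adjacent'' part ($|i-j|\in\{2,3\}$); since the $5$-cycles being counted live in $G_1$, not $G$, this colour information carries nothing beyond the part labels and does not by itself force the remaining product below $y_1y_2$. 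For non-incident pairs you are missing the paper's key device: form the auxiliary graph on vertex set $E_f$ in which two funky edges are adjacent iff they meet a common $X_i$. Because $K_5$ has matching number $2$, this graph has no independent set of size $3$, so its complement is triangle-free, and Mantel's theorem gives at most $|E_f|^2/4$ non-edges --- exactly the pairs of funky edges spanning four parts. Each such pair contributes one factor for the fifth vertex, and $\binom{n}{2}\le n^2/2$ turns $\tfrac14(f\binom{n}{2})^2$ into the $\tfrac18 f$ you need. Your ``group by missing part and average'' plan recovers neither the $\tfrac14$ nor a principled replacement of $y_1$ by $y_3$ (averaging over parts would give $\tfrac15$, not $y_3$). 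As for the obstacle you highlight --- $y_3y_4,y_3$ versus $y_1y_2,y_1$ --- the paper does not resolve it via casework or averaging; it dispatches it in a single sentence (``as we are correcting for the double count in~\eqref{base}''), tying the scale of the correction to the conservative scale $y_3y_4y_5$ already used in the positive term rather than to a worst-case part product.
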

	
	\begin{proof}
	Note that the new $5$-cycles are exactly the vertex sets $\{v_1,v_2,v_3,v_4,v_5\}$ with $v_i\in X_i$ which span at least one funky edge in $G$. We count these cycles using inclusion and exclusion principle by counting pairs $(F,C)$, where $F$ is a set of funky edges in $G$, and $C$ is a $5$ cycle in $G_1$ containing the vertices of $F$.

We start by counting pairs $(\{vw\},C)$, where $vw$ is a funky edge in $G$. First we pick a vertex $v$, then a funky neighbor $w$ from the $d_f(v)$ choices, and then one vertex each from the three parts we have not yet used, which gives us at least $y_3y_4y_5n^3$ choices. Summing up over all choices of $v$, this double counts the pairs, as we can reverse the roles of $v$ and $w$, and we multiply by $\frac12$ to get the first term of the bound
		\begin{align}\label{base}
		\sum_{v\in V}  \frac12d_f(v)y_3y_4y_5n^3.
		\end{align}
This would be the number of new cycles if every new cycle contained exactly one funky edge. But new cycles with $2\le r\le 10$ funky edges are counted $r$ times by this bound, so we have to carefully correct for this.

In the next step, we are counting pairs $(\{vw,xy\},C)$, with $vw, xy$ distinct funky edges in $G$. First, we are counting cycles with $v=x$. For a vertex $v$, there are at most ${4 \choose 2} \cdot \left( d_f(v)/4 \right)^2 =  \frac{3}{8}d_f(v)^2$ ways to pick $\{w,y\}$ from two different sets, with equality if $v$ sends the same number of funky edges to each of the four parts. Then, the remaining two vertices for $C$ are picked from the two remaining sets. As we are correcting for the double count in~\eqref{base}, this is maximized if these two last sets have sizes $y_3n$ and $y_4n$.

Next, we are counting cycles with $vw$ and $xy$  non-incident, i.e.~the funky edges intersect four parts. We claim that there are at  most $\frac{\left(f{n\choose 2}\right)^2}{4}$ pairs of funky edges intersecting four parts. Consider the graph with vertex set $E_f$, and two members of $E_f$ are adjacent if they intersect a common $X_i$. As $K_5$ has matching number $2$, this graph has independence number at most $2$. By Mantel's Theorem this graph has at most $\frac{|E_f|^2}{4}$ non-edges, which correspond exactly to pairs of funky edges intersecting four parts in $G$.

For every such pair of funky edges, we choose a fifth vertex in the remaining part to complete a new $C_5$ in $G_1$. As we are correcting for the double count in~\eqref{base}, this is maximized if this last set has sizes $y_3n$.

If we subtract the count of pairs $(\{vw,xy\},C)$ from \eqref{base}, every cycle with $r$ funky edges is counted $r-{r\choose2}\le 1$ times. In total, this gives us a lower bound for new $5$-cycles in $G_1$:
\begin{align*}
&y_3y_4y_5n^3\sum_{v\in V}  \frac12d_f(v)-\frac{3}{8}y_3y_4n^2\sum_{v\in V}d_f(v)^2 - \frac{\left(f{n\choose 2}\right)^2}{4}y_3n\\
&= y_3y_4y_5n^3f{n\choose 2}-\frac{3}{8}y_3y_4n^2\sum_{vw\in E_f}(d_f(v)+d_f(w))- \frac{\left(f{n\choose 2}\right)^2}{4}y_3n\\
&= y_3y_4y_5n^3f{n\choose 2}-\frac{3}{8}y_3y_4n^3df{n\choose 2}- \frac{\left(f{n\choose 2}\right)^2}{4}y_3n.
\end{align*}
This proves the claim as ${n\choose 2}\le \frac{n^2}{2}$.
	\end{proof}
	
	As the final step, we compare $G_1$ to the iterated balanced blow-up of $C_5$ on $n$ vertices. Note that all induced $C_5$ in $G_1$ either contain one vertex from each $X_i$, or are completely inside one $X_i$. Therefore, by induction on $n=5k+j$, $0\le j\le 4$, 
	we have
	\begin{align}\label{g1}
	\begin{split}
	C(n){n\choose 5}-C(G_1){n\choose 5}\ge& ~k^{5-j}(k+1)^j+(5-j)C(k){k\choose 5}+jC(k+1){k+1\choose 5}\\
	&-\left(\prod_{i=1}^5 y_in + \sum_{i=1}^5 C(y_in){y_in\choose 5}\right).
	\end{split}
	\end{align}
	
	We then wish to show that the balanced iterated blow-up of a $5$-cycle contains more $5$-cycles than $G$, which we do by creating an integer program to bound that difference. In particular, from Claims \ref{count1}, \ref{count2}, and \ref{count3}, we may bound the net gain of $5$-cycles created by removing the funky edges from $G$ to get $G_1$. Then from \eqref{g1}, we may also bound the gain in $5$-cycles going from $G_1$ to the balanced iterated blow-up. This gives an objective function, which is a lower bound on the difference in $5$-cycles going from $G$ to the balanced iterated blow-up. Thus, if our integer program evaluates to a positive number, we know that $G$ cannot possibly be a counterexample. We also include Lemma \ref{numFunky} as bounds in the program. Furthermore, if we examine Claim \ref{count1}, we can see that $f {n \choose 2} \ge dn+1$, as otherwise we would have a negative number of $5$-cycles. Therefore, we solve the following program $(P)$ in the variables $(y_1,y_2,y_3,y_4,y_5,f,d)$, for the fixed $n=5k+j$, $0\le j\le 4$: 
	\begin{align*}
	{\bf (P):}  &
	\text{minimize} \\
	& f{n\choose 2}n^3 \Bigg(y_3 y_4 y_5 -  \frac38d y_3 y_4 - \frac18 f y_3\\
	& -\frac{1}{4}\left(f -\frac{f+d}{n}-\frac{1}{n^2}\right)\left( y_1+y_2+\frac12(y_3+y_4+y_5)\right)
	- \frac{9}{32} \left(d+\frac{2}{n}\right) y_1^2 \Bigg)\\
	& + k^{5-j}(k+1)^j+(5-j)C_5(k){k\choose 5}+jC_5(k+1){k+1\choose 5}\\
	&-\left(\prod_{i=1}^5 y_in + \sum_{i=1}^5 C_5(y_in){y_in\choose 5}\right)\\
	&\text{subject to}\\
	& \sum_{i=0}^5 y_i = 1, \\
	& \sum_{1\le i<j\le 5}y_iy_{j}-f\frac{n-1}{2n} \ge  \frac{2 ( -0.175431374077117 + 8.75407592662244 C(n^*))}{21 C(n^*)},\\
	& f{n\choose 2}\ge dn+1,\\
	& y_{i}\ge y_{i+1}\ge 0  \text{ for } i \in \{1,\ldots,4\}, \\
	& ny_i\in \mathbb{N}.
	\end{align*}
	Looking a bit closer, we quickly see that in an optimal solution, we have 
	that $f=0$ (and we are done) or $f$ is maximized subject to the $y_i$, and that $d$ is maximized subject to $f$, which happens when the funky edges induce a star.
	Then 
	\[
	\frac{2dn+2}{n(n-1)}=f=\sum_{i<j}y_iy_{j}- \frac{2 ( -0.175431374077117 + 8.75407592662244 C(n^*))}{21 C(n^*)},
	\]
	so $(P)$ reduces 
	to a quartic program in the $4$ free variables $y_1,y_2,y_3,y_4$, with all other variables dependent on these four.
	
	We check every $9 \le n \le 1000$, for all possible values of $y_1,y_2,y_3,y_4$, with the help of a computer. It would be feasible to extend this approach a fair bit beyond $n=1000$, but there is no need as our other case easily takes care of these values.
	
	This leads to a list of $14$ possible values of $y_1,y_2,y_3,y_4$ where the objective function is negative, with at most $22$ vertices, we have included the list in the Appendix. Note that each of these may correspond to more than one graph, as $y_1,\dots,y_5$ may not be in the same order as $x_1,\dots,x_5$. However in most cases there are only one or two ways in which the $y_i$ may be matched to the $x_i$ once we consider the symmetry of the $5$-cycle and the two colors. Since the value in the objective function is merely a bound on the difference in the number of $5$-cycles between $H$ and the iterated blow-up of a $5$-cycle, this does not imply that the part sizes will give a counterexample, but rather that we need to check these values separately with more care. 


For this, we first make use of Lemma \ref{numFunky} to bound the number of funky edges for each set of possible values of $x_1,\dots,x_5$. In none of the cases we have to consider more than $6$ funky edges. Then, we consider all locations these funky edges can be in. Each funky edge can be between any of the $10$ pairs $(X_i,X_j)$, giving us at most ${9+k\choose k}$ choices for these pairs of $k$ funky edges, and then we have to consider all possible incidences of the funky edges.

Even if we were to reduce the number of such cases further through the use of symmetries, it would be very unpleasant for a human analysis. But is very easy with the help of the computer, even without any deeper analysis. The location of the funky edges completely determines the color of all edges between the $X_i$.

We do not assign colors to the edges inside the $X_i$ to keep the number of cases manageable. Instead, we count every set of $5$ vertices that could induce a $C_5$ given the right choice of colors inside the $X_i$, even if two such sets would require conflicting colors. We compare this count with the number of $C_5$ in the iterated balanced blow-up of $C_5$, and in all but one case, the iterated blow-up wins.

The only remaining case is $X_1 = X_2 = 3, X_3 = X_4 = X_5 = 1$, with a matching of three funky edges between $X_1$ and $X_2$, see Figure \ref{fig_Final}. This case counts $18$ possible $5$-cycles, $6$ using one vertex from each $X_i$, and $12$ using exactly $2$ of the $3$ funky edges. This is more than the balanced blow-up on $9$ vertices, which contains $16$ $5$-cycles. But here, we can use that the last $12$ of the possible $5$-cycles in this case can be paired into $6$ pairs with conflicting colors on the edges inside $X_1$ and $X_2$, so that at most one in each pair can actually be a $5$-cycle. Therefore, no  coloring of the $6$ edges inside $X_1$ and $X_2$ can create more than $12$  $5$-cycles.

\begin{figure}[H]
\begin{center}
\newcommand{\base}{
\draw \foreach \x  in {0,...,4}{
(90+\x*72:1) node[vtx](x\x){}
(90+\x*72:1.3) coordinate(y\x)
(90+\x*72:1.7) coordinate(z\x)
}
(x0)--(x1)--(x2) (x3)--(x4)--(x0)
;
}
\begin{tikzpicture} \base \draw (y2) node[vtx]{} (y3) node[vtx]{}  (z2) node[vtx]{} (z3) node[vtx]{}  (x1)--(y2) (y3)--(x4)  (x1)--(z2) (z3)--(x4) (x2)--(y3) (y2)--(z3) (z2)--(x3) (x2)--(z3) (y2)--(x3) (z2)--(y3)  ;  \end{tikzpicture}
\end{center}
\caption{The final remaining case with $X_1 = X_2 = 3, X_3 = X_4 = X_5 = 1$. Only red edges known to be there are shown.}\label{fig_Final}
\end{figure}
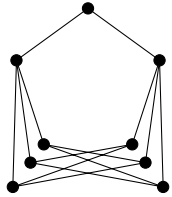

	\begin{case} \label{large}
		$n\ge 1000$:
	\end{case}

	As we are dealing with infinitely many values of $n$, we first establish a common bound for $C(G^*)$ for all $n \ge 1000$.
	
	\begin{prop}
		For $n \ge 1000$, $C(G^*) > 0.0384609$.
	\end{prop}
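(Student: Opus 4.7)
Since $G$ is extremal and, by the induction on $n$, the main theorem already holds for all smaller orders, we have $C(G) \ge C(n)$; combined with \eqref{eq:limit} this yields $C(G^*) \ge C(n^*)$. So the task reduces to showing $C(n^*) > 0.0384609$ for every $n \ge 1000$. Writing $n = 5m + r$ with $m = \lfloor n/5 \rfloor$ and $r \in \{0,1,2,3,4\}$, the iterated balanced blow-up on $n$ vertices has $5-r$ top parts of size $m$ and $r$ top parts of size $m+1$; an induced $C_5$ of the doubly blown-up limit either lies inside a single top part (contributing $C(m^*)$ or $C((m+1)^*)$) or uses exactly one vertex from each of the five top parts. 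The first step of the plan is to turn this observation into a formula via \eqref{eq:blowup} and \eqref{eq:limit}, yielding the self-similar recursion
\[
C(n^*) \;=\; \frac{120\,m^{5-r}(m+1)^{r} + (5-r)\,m^{5}\,C(m^*) + r\,(m+1)^{5}\,C((m+1)^*)}{n^{5}},
\]
whose $r=0$ instance $C((5m)^*) = (24 + C(m^*))/625$ has unique fixed point $1/26$ (which also explains why $C((5^k)^*) = 1/26$ exactly).

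I would then split the range $n \ge 1000$ into two pieces. For $1000 \le n \le 5000$ I would simply iterate the recursion by computer, starting from the small-case values $C(k^*)$ obtained directly from \eqref{eq:limit}, and verify $C(n^*) > 0.0384609$ in every case. For $n > 5000$ I would argue by strong induction on $n$: here $m = \lfloor n/5 \rfloor \ge 1000$, so the inductive hypothesis (together with the base range) supplies $C(m^*),\,C((m+1)^*) > 0.0384609$, and plugging these into the recursion reduces the target to the polynomial inequality
\[
120\,m^{5-r}(m+1)^{r} \;>\; 0.0384609 \bigl(n^{5} - (5-r)\,m^{5} - r\,(m+1)^{5}\bigr),
\]
which can be checked separately for each $r \in \{0,1,2,3,4\}$. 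For $r=0$ it collapses to $120 > 0.0384609 \cdot 3120 = 119.998008$, giving a leading slack of about $0.002\,m^{5}$; for $r \ge 1$ the additional lower-order correction is only $O(m^{3})$ and is easily dominated once $m \ge 1000$ (indeed, for $m$ already around $200$).

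The hard part will be the extreme tightness of the numerical bound: $1/26 - 0.0384609$ is only about $6 \times 10^{-7}$, so the slack in the induction step (essentially $0.002/3125$) is of the same order of magnitude as the gap that must be certified. Consequently both the computer check in the base range and the polynomial manipulations in the inductive step must be carried out with enough precision---exact rational arithmetic is the natural choice---and the cutoff $5000$ between the base range and the induction is chosen generously to keep the polynomial inequalities comfortably away from their equality cases.
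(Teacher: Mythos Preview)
Your proposal is correct and follows essentially the same approach as the paper: reduce to bounding $C(n^*)$, establish the self-similar recursion for $C(n^*)$, verify the base range $1000\le n\le 5000$ by direct computation, and then induct. The only cosmetic difference is in the inductive step: the paper bounds $C((5n+i)^*)$ below by a single expression in $n$ and $i$ and shows it exceeds $0.0384609$ at $n=1000$ and is increasing in $n$ via a derivative, whereas you reduce to the explicit polynomial inequality $120\,m^{5-r}(m+1)^r>0.0384609\bigl(n^5-(5-r)m^5-r(m+1)^5\bigr)$ and check it case by case in $r$; both verifications are routine once the recursion is in hand.
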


	\begin{proof}
Since we know that $C(H) \ge C(n)$ and thus $C(G^*) \ge C(n^*)$, it suffices to bound $C(n^*) > 0.0384609$ for $n \ge 1000$. 
Note that from $C(n)\ge \frac1{26}$, it follows that
\[
C(n^*)>\frac{(n-1)(n-2)(n-3)(n-4)}{n^4}C(n)\ge\frac{(n-1)(n-2)(n-3)(n-4)}{26n^4} .
\]
For $n\ge 610000$, this quantity is larger than $0.0384609$, so one way to show the proposition is to explicitly calculate $C(n^*)$ for all $n\le 610000$, and then use this observation.

At this point violating our philosophy of not arguing facts by hand that can easily be checked by the computer, we give
a slightly less computational proof. We only check that the claim is true for $n \le 5000$ by explicit computation, and then argue by induction. Let $n\ge 1000$, and $C=\min\{C(n^*),C((n+1)^*)\}$, then for $0\le i\le 4$,
		\begin{align*}
		C((5n+i)^*)=&~120\left(\frac{n}{5n+i}\right)^{5-i}\left(\frac{n+1}{5n+i}\right)^{i} 
		                    +\frac{(5-i)n}{5n+i}\left(\frac{n}{5n+i}\right)^4C(n^*)\\
		&~+\frac{i(n+1)}{5n+i}\left(\frac{n+1}{5n+i}\right)^4C((n+1)^*)\\
		\ge&~ 120\left(\frac{n}{5n+i}\right)^{5-i}\left(\frac{n+1}{5n+i}\right)^{i}+\left(\frac{n}{5n+i}\right)^4C\\
		\ge&~ \left(\frac{1}{5n+i}\right)^5\left( 120(n^5+in^4)+(5n^5+in^4) C  \right)\\
		>&~ \left(\frac{n}{5n+i}\right)^5\left( 120+5C+120\frac{i}{n}\right).
		\end{align*}
		Now for $n = 1000, 0 \le i \le 4$ this value is larger than $0.0384609$. We also know that,
		\[
		\frac{\partial}{\partial n}\left(\frac{n}{5n+i}\right)^5\left( 120+5C+120\frac{i}{n}\right)=5 i n^3 \frac{ 5 C n+96i}{(5 n+i)^6} > 0.
		\]
		Therefore, as for fixed $i$ we know that $C((5n+i)^*)$ is increasing with respect to $n$, and since $C(n^*) > 0.0384609$ for $1000 \le n \le 5000$, we have the desired result. 
	\end{proof}

\begin{subcase}
$d\le 0.2$:
\end{subcase}
	We first assume that $d$, the normalized average funky degree sum of funky edges, is small. 
	We use the same process as before, where we flip all funky edges and then compare the number of $5$-cycles. 
%

	Consider the following program $(P')$ with $C=0.0384609$ for any fixed $d$. It is derived from $(P)$ by first dividing the objective function by $f{n\choose 2}n^3$, and then using $n=1000$ or $n\to\infty$ depending on which is yielding a lower objective function. Also, we skip the last step of balancing the parts for an easier objective function. We account for this in Claim \ref{balance}.
	\begin{align}
	{\bf (P'):}&
	\text{minimize} \nonumber \\
	& y_3 y_4 y_5 -  \frac38dy_3 y_4 - \frac18 f y_3
	-\frac{1}{4}f \left( y_1+y_2+\frac12(y_3+y_4+y_5)\right)
	- \frac{9}{32} d y_1^2-\frac{9}{16\times 1000}y_1^2\label{obj}\\
	&\text{subject to}\nonumber \\
	& \sum_{i=1}^5 y_i = 1, \\
	& \sum_{1\le i<j\le 5}y_iy_{j}-f\frac{1000-1}{2\times 1000} \ge \frac{2 ( -0.175431374077117 + 8.75407592662244 C)}{21 C},\label{yibound}\\
	& f>0,\nonumber \\
	& y_i\ge y_{i+1} \geq 0 \text{ for } i \in \{1,\ldots,4\}. \label{yiorder}
	\end{align}
	The objective function \eqref{obj} decreases for increasing $d$ and $f$. Consequently, we fix $d=0.2$. We know that $f$ is maximized in~\eqref{yibound}  for $y_1=y_2=y_3=y_4=y_5=0.2$, and we fix $f$ at this maximum in \eqref{obj}.  At the same time, the bound on the $y_i$ derived from~\eqref{yibound} is weakest for $f=0$, so we will use $f=0$ when applying this bound.
	
This leaves us with a continuous cubic program in the four variables $y_1,y_2,y_3,y_4$, with dependent variable $y_5=1-y_1-y_2-y_3-y_4$.	Instead of trying to solve this program, we discretize to find a lower bound greater than zero, the desired contradiction.

For any grid point $(t_1,t_2,t_3,t_4)$ and some $\eps>0$, we consider the cell $\prod [t_i,t_i+\eps]$. Note that this implies a range of $[t_5-4\eps, t_5]$ for the size of the smallest part if we set $t_5=1-t_1-t_2-t_3-t_4$. We check if the cell contains a point $(y_1,y_2,y_3,y_4)$ satisfying \eqref{yiorder}. If this is the case, then we check if there may be a point (not necessarily the same) in the cell satisfying \eqref{yibound} by computing generously $t_5(1-t_5)+\sum_{1\le i<j\le 4}(t_i+\eps)(t_j+\eps)$. If the answer is positive, we lower bound \eqref{obj} in the box by computing
\begin{align}
(t_3+\eps)(t_4+\eps) (t_5-4\eps) -  \frac38d(t_3+\eps)( t_4+\eps) - \frac18 f (t_3+\eps)\hspace{3cm}\nonumber\\
	-\frac{1}{4}f \left( t_1+t_2+\frac12(t_3+t_4+t_5)+\eps\right)
	- \frac{9}{32} d (t_1+\eps)^2-\frac{9}{16000}(t_1+\eps)^2.\label{obj2}
\end{align}
Every term in this sum but possibly the first is easily seen to be a lower bound for the corresponding term in \eqref{obj} over all values of $(y_1,y_2,y_3,y_4)$ in the cell. The first term is a lower bound over all values satisfying \eqref{yiorder}.
%
%
	
	To reduce the number of points to check, we include a few additional considerations. First, note that from \eqref{yibound}, we can get the additional constraint that $0.166 \le y_i \le 0.234$. Secondly, rather than fixing some $\eps > 0$ and checking all cells, we iteratively refine the mesh only where needed. This allows us to have a more refined search, as some cells in our feasible region will clearly produce positive objective values. We begin by initializing with a single cell with $t_i = 0.166$ for $i \in [4]$ and $\eps = 0.234-0.166$. Then every time when \eqref{obj2} evaluates to $<0.0001$ (to allow for rounding errors), we halve $\eps$ and create $2^4$ new points depending on whether $t_i$ remains the same or $t_i = t_i + \frac{\eps}{2}$. These $16$ new cells are added to a stack. Cells in the stack are evaluated one by one, each time either removing it if  \eqref{obj2} evaluates greater than $0.0001$, or removing it and adding $16$ new cells to the stack. 
	
	 The program runs in a few minutes on a laptop, and makes around $1.8 \cdot 10^{6}$ calls to the objective function \eqref{obj2}. Furthermore, the stack never contains more than $100$ elements, meaning that we never have to iterate too far into one specific area of the feasible region. Note that with more computational effort, this program could also yield a contradiction for some larger value of $d$. But $d=0.2$ more than suffices for the next case. 
	
\begin{subcase}
$d > 0.2$:
\end{subcase}

We now show that we can not have $d > 0.2$ by looking at a single vertex with maximum funky degree. Let $v$ be such a vertex with maximum funky degree $d_f(v)=\Delta_f>0.1n$. Note that in the remainder of the proof all $5$-cycles we consider contain $v$, and we will not point this out every time. We will use a rule to move $v$ to one of the parts $X_1,\ldots,X_5$, and flip all resulting funky edges incident to $v$ to create a graph $G_1$. We then bound the number of $5$-cycles created and destroyed and show that we have more $5$-cycles in $G_1$, our desired contradiction. Without loss of generality assume that $v \in X_1$ at the beginning.
	
Let $r_in$ and $b_in$ be the numbers of red and blue neighbors of $v$ in $G$ in $X_i$, respectively. 
As the partition into the $X_i$ maximizes the number of non-funky edges, moving $v$ to some new part cannot increase this number. Therefore,
	
	\[
	r_2+b_3+b_4+r_5\ge \max\{r_1+b_2+b_3+r_4,r_3+b_4+b_5+r_1,r_4+b_5+b_1+r_2,r_5+b_1+b_2+r_3\}.
	\]
	
	Furthermore as $f > 0.2$,
	
	\[
	b_2+r_3+r_4+b_5=\frac{d_f(v)}{n}> 0.1.
	\]
	
	For some $1\le i\le 5$, move $v$ to  $X_i$, and flip all resulting funky edges incident to $v$ after the move to create the graph $G_1$. We bound the numbers of $5$-cycles containing $v$ in $G$ and $G_1$, and depending on these bounds we choose which $X_i$ we move $v$ to. As no edges from $v$ to this $X_i$ are flipped, the number of $5$-cycles inside $X_i$ is not affected by the flip. In $G_1$, there are at least
	\begin{align}\label{bigcyc}
	\frac{x_1x_2x_3x_4x_5}{x_i}n^4-f{n\choose 2}n^2\max_{|\{i,j,\ell\}|=3}x_jx_\ell
	\end{align}
	$5$-cycles which have at least one vertex outside of $X_i$. To see this, we simply pick one vertex for every single part not $X_i$. The only reason they would not form a $C_5$ in $G_1$ is if there was a funky edge between two of these four vertices. Every funky edge then destroys at most $n^2\max_{|\{i,j,\ell\}|=3}x_jx_\ell$ $5$-cycles of this form. 
	
	We choose $i$ to maximize \eqref{bigcyc}, so let 
	\[
	M_1:=\max_i\left\{ \frac{x_1x_2x_3x_4x_5}{x_i}n^4-f{n\choose 2}n^2\max_{|\{i,j,\ell\}|=3}x_jx_\ell \right\}.
	\]
	
	That is, $M_1$ is a lower bound on the number of $5$-cycles not entirely in $X_i$ in $G_1$, and we wish to compare this to the number of $5$-cycles in $G$. We first bound the number of $5$-cycles in $G$ in which all funky edges are incident to $v$. In particular, the remaining four vertices must induce a $P_4$, so they must either all lie in the same $X_j$, or in four different $X_j$s. The number of such $5$-cycles containing a vertex outside of $X_i$ is thus at most
	\[
	M_2:=\left(r_1b_2b_3r_4+r_2b_3b_4r_5+r_3b_4b_5r_1+r_4b_5b_1r_2+r_5b_1b_2r_3+\tfrac{1}{16}(r_2^2b_2^2+r_3^2b_3^2+r_4^2b_4^2+r_5^2b_5^2)\right)n^4.
	\]
	Let us now bound the number of $5$-cycles in $G$ containing a funky edge not incident to $v$.  There are at most
	\[
	f{n\choose 2}\frac14n^2
	\]
	such cycles, as we can first pick some funky edge, and then select two other vertices (see Lemma~\ref{lem}). This however over counts all cycles which contain more than one funky edge not incident to $v$.
	To get a better bound, we will now bound the number of cycles which contain exactly one funky edge $uw$ not incident to $v$.
	There are ten different cases depending on the location of $uw$. Since all cases are symmetric by rotation or a color switch, we only have to analyze one case in detail. 
	
	Let us assume that $u\in X_1$, $w\in X_2$, so $uw$ is a blue funky edge.
	Let $x,y$ be the remaining $2$ vertices of a $C_5$. There are three cases depending on the colors of $uv$ and $vw$ (they cannot both be blue).
	If $uv$ and $vw$ are red, then $xv$ and $yv$ are blue, and we may assume (by symmetry) that $xu$ and $wy$ are the remaining two blue edges of the $C_5$. 
	Then $x\in X_1,y\in X_2$, or $x\in X_1,y\in X_5$, or $x\in X_3,y\in X_2$, as otherwise there would be more funky edges.
	
	If $uv$ is blue and $vw$ is red, then we may assume that $vuwxyv$ is the blue $C_5$. Then
	$x\in X_5,y\in X_2$, or $x\in X_2,y\in X_2$. Finally, if $uv$ is red and $vw$ is blue, and $vwuyxv$ is the blue $C_5$, then
	$x\in X_1,y\in X_3$, or $x\in X_1,y\in X_1$.
	Altogether, the number of $5$-cycles containing $\{u,v,w\}$ and no other funky edge not incident to $v$ is at most
	\[
	\max\{b_1b_2+b_1b_5+b_3b_2,r_5b_2+r_2b_2,b_1r_3+b_1r_1\}n^2.
	\]
	With ten choices for the sets of $\{u,w\}$, this maximum is extended to a maximum of $30$ terms:
	\[
	M_3:= \max\left.
	\begin{cases}
	~b_1b_2+b_1b_5+b_3b_2,~r_5b_2+r_2b_2,~b_1r_3+b_1r_1,\\
	~b_2b_3+b_2b_1+b_4b_3,~r_1b_3+r_3b_3,~b_2r_4+b_2r_2,\\
	~b_3b_4+b_3b_2+b_5b_4,~r_2b_4+r_4b_4,~b_3r_5+b_3r_3,\\
	~b_4b_5+b_4b_3+b_1b_5,~r_3b_5+r_5b_5,~b_4r_1+b_4r_4,\\
	~b_5b_1+b_5b_4+b_2b_1,~r_4b_1+r_1b_1,~b_5r_2+b_5r_5,\\
	~r_1r_3+r_5r_3+r_1r_4,~b_4r_3+b_3r_3,~b_5r_1+b_1r_1,\\
	~r_2r_4+r_1r_4+r_2r_5,~b_5r_4+b_4r_4,~b_1r_2+b_2r_2,\\
	~r_3r_5+r_2r_5+r_3r_1,~b_1r_5+b_5r_5,~b_2r_3+b_3r_3,\\
	~r_4r_1+r_3r_1+r_4r_2,~b_2r_1+b_1r_1,~b_3r_4+b_4r_4,\\
	~r_5r_2+r_4r_2+r_5r_3,~b_3r_2+b_2r_2,~b_4r_5+b_5r_5
	\end{cases}\right\}.
	\]
	Therefore, we get the following upper bound for the 
	number of $5$-cycles containing a funky edge not incident to $v$ after we adjust for double counts:
	\begin{equation}\label{eq:P''}
	 f {n \choose 2} n^2\frac{1}{2} \left(\frac{1}{4}-M_3 \right) + M_3f {n \choose 2} n^2.
	\end{equation}
	The first term bounds cycles with more than one funky edge not adjacent to $v$, where the $\frac12$ comes from the fact that $ f {n \choose 2} n^2$ at least double counts these $5$-cycles. The second term bounds the number of $5$-cycles with exactly one funky edge not adjacent to $v$. We then create a mathematical program $(P'')$, we wish to lower bound, with \eqref{eq:P''} as our objective function. We also include the same bounds coming from Lemma \ref{numFunky} as well. 
	
	\begin{align*}
	{\bf (P''):} 
	&\text{minimize} \\
	& n^{-4}\left(M_1-M_2-\left(\frac18+\frac12 M_3\right)f{n\choose 2}n^2\right) \\
	&\text{subject to}\\
	& \sum_{i=1}^5 x_i = 1, \\
	& x_i=r_i+b_i,\\
	& \sum_{1\le i<j\le 5}x_ix_{j}-f\frac{n-1}{2n} \ge \frac{2 ( -0.175431374077117 + 8.75407592662244 \,C)}{21 \,C},\\
	& f>0\\
	& r_i,b_i \geq 0 \text{ for } i \in \{1,\ldots,4\}. \\
	\end{align*}
	
	The factor of $n^{-4}$ in the objective function is for normalization, and cancels many terms. We fix $f$ at its maximum of $\frac{2000}{999}\left( 10\times 0.2^2-\frac{2 ( -0.175431374077117 + 8.75407592662244 C)}{21 C}\right)$. The objective function grows with $n$, so we fix $n=1000$. 
	
	Similar to how we solved $(P')$, we cover the feasible region by an $\varepsilon$-grid in the nine variables $x_2, x_3, x_4, x_5$, $r_1, r_2, r_3, r_4, r_5$ with dependent variables $x_1,b_1,b_2,b_3,b_4,b_5$, and replace every variable in each term of the function by its maximum or minimum in each grid cell to bound the function. We also introduce the same constraints of $0.166 \le x_i \le 0.234$ as in $(P')$ to help speed up computation. We then use the same technique of reducing $\varepsilon$ by a factor of $\frac12$ each iteration, creating now $2^9$ new cells for the independent variables. 
	It turns out that $(P'')$ requires even less computation than $(P')$ running in less than a minute with fewer than $1,000$ calls to the objective function, despite the fact that the discretization creates more cells at each iteration. 
	
This proves that there are no funky edges, so $G$ is a blow-up of $C_5$. It remains to show that the blow-up is balanced, then Theorem~\ref{thm_main} follows by induction.

\begin{claim}\label{balance}
The extremal graph $G$ is a balanced blow-up of $C_5$.
\end{claim}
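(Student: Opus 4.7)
I would argue by induction on $n$. Since $G$ is a blow-up of $C_5$ with parts $X_1,\dots,X_5$ of sizes $n_i:=|X_i|$, every induced $C_5$ in $G$ either lies entirely inside one $X_i$ or uses exactly one vertex from each part, so
\[
\mathrm{ind}(C_5, G) \;=\; \prod_{i=1}^5 n_i \;+\; \sum_{i=1}^5 \mathrm{ind}(C_5, G[X_i]).
\]
Because $G$ is extremal, each $G[X_i]$ must itself be extremal on $n_i$ vertices (otherwise replacing it by an extremal graph on $n_i$ vertices strictly improves the count). By the inductive hypothesis this count equals $T(n_i) := C(n_i)\binom{n_i}{5}$ (the same value whether $G[X_i]$ is an iterated balanced blow-up or, when $n_i=8$, a M\"obius ladder or its complement). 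Write $F(n_1,\dots,n_5):= \prod n_i + \sum T(n_i)$.

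Suppose for contradiction that the partition is unbalanced; after relabeling $n_1 \ge n_5 + 2$. Consider the $C_5$-blow-up $G'$ on $n$ vertices with part sizes $(n_1{-}1, n_2, n_3, n_4, n_5{+}1)$ and extremal graphs placed inside the parts. A direct computation gives
\[
F(n_1{-}1, n_2, n_3, n_4, n_5{+}1) - F(n_1,\dots,n_5) \;=\; n_2 n_3 n_4(n_1{-}n_5{-}1) - \Bigl[\bigl(T(n_1) - T(n_1{-}1)\bigr) - \bigl(T(n_5{+}1) - T(n_5)\bigr)\Bigr].
\]
Direct enumeration from \eqref{eq:blowup} shows that the first differences of $T$ are non-decreasing, so the bracket is nonnegative; the plan is to show that the product term on the right strictly exceeds the bracket, contradicting the extremality of $G$.

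To control the bracket I would apply Lemma~\ref{numFunky} with $f=0$ (now available since we have just shown there are no funky edges). This gives $\sum_{i<j} x_i x_j \ge \alpha$ with $\alpha$ close to $2/5$ (explicitly $\alpha \to 2/5$ as $n\to\infty$, using $C(n^*)\to 1/26$), which forces $\sum x_i^2 \le 1 - 2\alpha$ and pins each $x_i := n_i/n$ inside a narrow window around $1/5$; in particular $n_2 n_3 n_4 = \Theta(n^3)$. On the other hand, the recursion \eqref{eq:blowup} combined with $C(m) \to 1/26$ yields $T(m) - T(m-1) = (1+o(1))\,m^4/624$, from which the bracket is bounded by a constant multiple of $m^3(n_1 - n_5 - 1)/156$ with $m$ in the window. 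Comparing, the product term beats the bracket by a constant factor of roughly $60$, establishing the strict inequality.

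The main obstacle is replacing the asymptotic sketch by a rigorous inequality with explicit constants valid for every $n \ge 9$. For small $n$ the enumeration of Case~\ref{medium} already handles the claim; for large $n$ one needs an explicit bound $T(m) - T(m-1) \le c\,m^4$, obtainable by unrolling \eqref{eq:blowup} a bounded number of times together with the induction hypothesis. The whole computation is finite and uses no new ideas beyond the window on the $n_i$ provided by Lemma~\ref{numFunky}.
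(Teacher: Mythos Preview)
Your plan is essentially the paper's: obtain the window on the part sizes from Lemma~\ref{numFunky} with $f=0$ (the paper uses the explicit range $0.166\le n_i/n\le 0.234$), and then show that shifting one vertex from the largest part $X_1$ to the smallest $X_5$ strictly increases the cycle count. The only difference is bookkeeping. You compare $F$ at two part-size vectors and bound a telescoped sum of second differences of $T$; the paper instead picks $v\in X_1$ minimizing $C_5^v$ and $w\in X_5$ maximizing $C_5^w$, replaces $v$ by a twin of $w$, and bounds $C_5^w-n_2n_3n_4-C_5^v$ via averaging together with the monotonicity $C(n_1)\le C(n_5)\le C(166)\le 0.04086$, which lets it avoid your convexity assertion on $T$ and your $(1+o(1))m^4/624$ asymptotics in favor of an explicit chain of inequalities.

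One correction: Case~\ref{medium} does not handle balance. That case only shows there are no funky edges for $9\le n\le 1000$; the paper then performs a \emph{separate} explicit enumeration over all partitions $n=n_1+\cdots+n_5$ (inside the proof of this very claim) to verify balance for $n\le 1000$, before running the shifting argument for $n>1000$. Your proof needs the analogous finite check.
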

\begin{proof}
We proceed by induction on $n$. We assume the statement is true for all smaller values. Then the number of $5$-cycles in an iterated blow-up with parts of sizes $n_1,n_2,n_3,n_4,n_5$ is at most
\[
n_1n_2n_3n_4n_5+C(n_1){n_1\choose 5}+C(n_2){n_2\choose 5}+C(n_3){n_3\choose 5}+C(n_4){n_4\choose 5}+C(n_5){n_5\choose 5}.
\]
As this quantity is symmetric in the $n_i$, we may assume from now on that $n_1\ge n_2\ge n_3\ge n_4\ge n_5$.
For $n\le 1000$, we explicitly compute these quantities for all partitions $n=n_1+n_2+n_3+n_4+n_5$, and verify that the lemma is true.

For $n>1000$, assume that $n_1-n_5\ge 2$. Note that \eqref{yibound} again implies that $0.166n \le n_5<n_1 \le 0.234n$.
Let $v \in X_1$ where the number of $5$-cycles $C_5^v$ containing $v$ is minimized  over the vertices in $X_1$.
Let $w \in X_5$ where the number of $5$-cycles $C_5^w$ containing $w$ is maximized  over the vertices in $X_5$.
The number of $5$-cycles containing both $v$ and $w$ is $n_2n_3n_4$.
If $C_5^w-n_2n_3n_4-C_5^v>0$, we can increase the number of $5$-cycles
by replacing $v$ by a copy of $w$, contradicting the extremality of $G$.

As $C(n)$ is non-increasing, we have
\[
0.04086 \ge C(166)\ge C(n_5)\ge C(n_1).
\]
Therefore, we have
\begin{align*}
C_5^w-n_2n_3n_4-C_5^v&\ge \frac{C(n_5){n_5\choose 5}}{n_5}+n_1n_2n_3n_4-n_2n_3n_4-\frac{C(n_1){n_1\choose 5}}{n_1}-n_2n_3n_4n_5\\
&= \frac{C(n_5){n_5-1\choose 4}-C(n_1){n_1-1\choose 4}}{5}+(n_1-n_5-1)n_2n_3n_4\\
&\ge  \frac{C(n_5)\left({n_5-1\choose 4}-{n_1-1\choose 4}\right)}{5}+(n_1-n_5-1)n_5^3\\
&\ge  \frac{C(166)\left(n_5^4-n_1^4\right)}{5!}+(n_1-n_5-1)n_5^3\\
&=  \frac{C(166)}{ 5!}(n_5-n_1)\left( n_5^3+n_5^2n_1+n_5n_1^2+n_1^3\right)+(n_1-n_5-1)n_5^3\\
&\ge  \frac{4C(166)}{ 5!}(n_5-n_1)n_1^3+\frac12(n_1-n_5)n_5^3\\
&= \frac12(n_1-n_5)\left( n_5^3-\frac{8C(166)}{ 5!}n_1^3\right)\\
&\ge \left( 0.166^3-\frac{8C(166)}{ 5!}\, 0.234^3\right)n^3\\
&>0,
\end{align*}
a contradiction.
\end{proof}
This proves Theorem~\ref{thm_main}.


\section{Proof of Lemma~\ref{funky}}\label{sec:lem}

We use flag algebras to show a slightly stronger statement that every sufficiently large graph $G$ with $C(G) \geq 0.03$
satisfies
	\[ C^{\bullet \bullet}(G) \ge -0.175431374077117 + 8.75407592662244 \, C(G) .\]
This type of inequality was used by Lidick\'y and Pfender~\cite{LP18} when solving the  Pentagon problem of Erd\H{o}s for small graphs.
The flag algebra method has been developed by Razborov~\cite{Razborov}, and has seen numerous applications such as ~\cite{GHV2019,CLP2020,KLTP2019,PST2019,GKV2016,BT2014,CKPSTY2013}. 
We assume the reader is familiar with the method and describe only a brief outline of the calculation rather than developing the entire theory and terminology. 
A description of the method when applied to graphs is available from several sources~\cite{PST2019,BDL2020}.
The calculation is computer assisted, and the program we used can be downloaded from the \href{https://arxiv.org/abs/2102.06773}{arXiv} version of this paper or \oururl.

Let $\varphi$ correspond to a convergent sequence of graphs $(G_i)_{i > 0}$.
For a  graph $H$ we denote by $\varphi(H)$ the limit of densities of $H$ in $G_i$ as $i$ tends to infinity.
Since $\varphi$ is actually a homomorphism to $\mathbb{R}$, it naturally extends to formal linear combinations of graphs.
The following inequalities are satisfied for any $\varphi$ and $\ell \ge 7$.
\begin{align*}
\varphi(C^{\bullet \bullet}) &= \varphi\left(  \sum_{F \in \mathcal{F}_\ell}  c_F F   \right) =   \sum_{F \in \mathcal{F}_\ell}  c_F \varphi(F)\\
0 &\geq \sum_{F \in \mathcal{F}_\ell}  -a_F \varphi(F) \\
0 &\geq -  \varphi\left(\sum_{\sigma} \llbracket x_\sigma^T M_\sigma x_\sigma \rrbracket_\sigma \right)
=   - \sum_{F \in \mathcal{F}_\ell}  e_F  \varphi(F)   
\end{align*}
where $\mathcal{F}_{\ell}$ are all graphs on $\ell$ vertices up to isomorphism, $c_F$ is the sum of densities of graphs in $C^{\bullet \bullet}$ in $F$, $a_F$ is any non-negative real number, $\sigma$ is a type, $x_\sigma$ is a vector of $\sigma$-flags, $M_\sigma$ is a positive semidefinite matrix,  $\llbracket  . \rrbracket_\sigma$ is the unlabeling operator, and $e_F$ are some real coefficients  depending on $F$, $x_\sigma$, and $M_\sigma$.
We also add the following constraint
\[
 0 \geq s (0.03 - \varphi(C_5)) = 0.03\cdot s -   \sum_{F \in \mathcal{F}_\ell} s b_F  \varphi(F) ,
\]
where $s$ is a positive real number and $b_F$ is the density of $C_5$ in $F$.
Combining all inequalities,  and using  $ \sum_{F \in \mathcal{F}_\ell} \varphi(F) =1$  gives
\begin{align*}
\varphi(C^{\bullet \bullet}) &\geq
\sum_{F \in \mathcal{F}_\ell}  (c_F-a_F-e_F-sb_F) \varphi(F) + 0.03\cdot s\\
&\geq
\min_{F \in \mathcal{F}_\ell}  (c_F-a_F-e_F-sb_F) \left( \sum_{F \in \mathcal{F}_\ell} \varphi(F) \right)+ 0.03\cdot s\\
&= \min_{F \in \mathcal{F}_\ell}  (c_F-a_F-e_F-sb_F) + 0.03\cdot s.
\end{align*}
Notice that the expression depends on positive semidefinite matrices $M_\sigma$ and the value of $s$.
We may optimize this lower bound using semidefinite programming software.

We use CSDP~\cite{csdp}
and $\ell=8$, and obtain a numerical solution.
We round the numerical solution to an exact rational solution using SageMath~\cite{sagemath} and obtain the following:
\begin{align*}
\varphi(C^{\bullet \bullet}) \geq&~   -\frac{175431374077116112876105446118032690611106}{1000000000000000000000000000000000000000000} \\ &~ + 0.03 \cdot\frac{8754075926622441195046069111932573299245056}{1000000000000000000000000000000000000000000} \\
 >&~   -0.175431374077117 + 0.03 \cdot 8.75407592662244.
\end{align*}
Notice that the lower bound does not depend on $\varphi$.
Therefore, assuming $\varphi(C_5) \geq 0.03$, we get a valid bound by replacing $0.03$ in this equation with the the density of $C_5$. 
Note further that the numerical approximation is strictly smaller than the rational solution.
This makes the result valid for every sufficiently large graph $G$, since flag algebra calculations on graphs instead of graph limits have error $o(1) = O(\frac{1}{n})$, where $n$ is the number of vertices of $G$.


\section{Further Directions}
As mentioned above, we know that $C_6$ and the net $N$ on $6$ vertices have (F3). For $N$, we know that it does not have (F5) as, similarly to $C_5$, there is a small extremal graph which is not a blow-up of $N$. For $C_6$, we are not aware of such an example, and our methods may be successful here.

As another direction, the notion of fractalizers directly translates to directed graphs.
It is easy to direct the edges in an iterated balanced blow-up of $C_5$ so that every induced copy of $C_5$ becomes a directed $\vec{C}_5$. This is not possible for the M\"obius ladder on $8$ vertices, so we get the following theorem as an immediate corollary of Theorem~\ref{thm_main}.
\begin{thm}
$\vec{C}_5$ is a fractalizer.
\end{thm}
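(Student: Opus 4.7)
The plan is to reduce the problem to Theorem~\ref{thm_main} via the underlying undirected graph. Given any digraph $D$ on $n$ vertices, let $G$ denote its underlying undirected graph. Every induced $\vec{C}_5$ in $D$ projects to an induced $C_5$ in $G$, so the number of induced $\vec{C}_5$ in $D$ is at most the number of induced $C_5$ in $G$. Conversely, in any iterated balanced blow-up $G_0$ of $C_5$, orienting every cross-part edge as $X_i \to X_{i+1}$ at every level of the recursion produces a digraph $D_0$ realizing \emph{every} induced $C_5$ as an induced $\vec{C}_5$: inductively, any induced $C_5$ of $G_0$ either lies inside a single blow-up part (handled by recursion) or picks exactly one vertex from each of the five parts at some level, and in the latter case the cyclic orientation forces it to be a directed cycle. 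Combined with Theorem~\ref{thm_main}, this identifies the maximum number of induced $\vec{C}_5$ over $n$-vertex digraphs with the maximum number of induced $C_5$ over $n$-vertex graphs.

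Next, extremality of $D$ forces $G$ to be extremal for undirected $C_5$, so by Theorem~\ref{thm_main} either $n \neq 8$ and $G$ is already an iterated balanced blow-up of $C_5$, or $n=8$ and $G$ is additionally allowed to be the M\"obius ladder $M_8$ or its complement $\overline{M_8}$. The main obstacle is ruling out these exceptional $n=8$ possibilities: one needs to show neither admits an orientation turning all $8$ induced $C_5$s simultaneously into $\vec{C}_5$s. For $M_8$, label the outer cycle as $0,1,\dots,7$ with diagonals $\{i,i+4\}$; the $8$ induced $C_5$s are the consecutive blocks $C^{(j)}=\{j,j+1,j+2,j+3,j+4\}\pmod{8}$, each using the outer path together with the diagonal $\{j,j+4\}$. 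Fixing $C^{(0)}$ as $0\to1\to2\to3\to4\to0$ pins the diagonal $\{0,4\}$ as $4\to 0$ and forces $1\to2\to3\to4$; this in turn forces $C^{(1)}$ to be oriented $1\to2\to3\to4\to5\to1$, giving $4\to 5$. But then the cycle $C^{(4)}=\{4,5,6,7,0\}$ has both $4\to 5$ and $4\to 0$ already pinned down, and these two out-arcs at $4$ cannot coexist in any consistent orientation of $C^{(4)}$ as a directed $5$-cycle, a contradiction. An analogous rigidity chase on $\overline{M_8}$ closes the $n=8$ case; its $8$ induced $C_5$s use a different edge pattern (for example $0\text{-}2\text{-}4\text{-}1\text{-}3\text{-}0$ on $\{0,1,2,3,4\}$), and carrying out that case is where I expect the bulk of the work to lie.

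Finally, once $G$ is pinned down as an iterated balanced blow-up of $C_5$, extremality demands that every induced $C_5$ of $G$ appears in $D$ as a $\vec{C}_5$. At the top level, requiring each cross-part induced $C_5$ to be a directed cycle forces the between-part orientations to be globally cyclic $X_i \to X_{i+1}$ (or the global reverse); recursing inside each $X_i$ then shows by induction on the blow-up depth that $D$ is itself an iterated balanced blow-up of $\vec{C}_5$. Aside from the $n=8$ rigidity analysis, every step is immediate from Theorem~\ref{thm_main} and the orientation construction.
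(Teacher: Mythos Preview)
Your overall strategy matches the paper's: pass to the underlying undirected graph, invoke Theorem~\ref{thm_main}, orient the iterated balanced blow-up of $C_5$ cyclically so that every induced $C_5$ becomes a $\vec{C}_5$, and then dispose of the exceptional $n=8$ cases. Your treatment of $M_8$ is correct and is precisely what the paper asserts (without writing out the forcing chain).

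The genuine gap is your assertion that ``an analogous rigidity chase on $\overline{M_8}$ closes the $n=8$ case.'' That chase does \emph{not} succeed: unlike $M_8$, the complement $\overline{M_8}$ admits an orientation making all eight of its induced $C_5$'s into directed cycles simultaneously. Concretely, orient every edge $\{i,i+2\}$ as $i\to i+2$ and every edge $\{i,i+3\}$ as $i+3\to i$ (indices mod $8$), i.e.\ take the Cayley digraph on $\mathbb{Z}_8$ with connection set $\{+2,-3\}$. By rotational symmetry it suffices to check one block, and indeed $\{0,1,2,3,4\}$ induces the directed cycle $0\to 2\to 4\to 1\to 3\to 0$. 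This oriented graph has $8$ induced copies of $\vec{C}_5$, matching the iterated balanced blow-up, yet it is not an iterated balanced blow-up of $\vec{C}_5$: it has $16$ arcs and is out-regular of degree $2$, whereas any balanced blow-up of $\vec{C}_5$ on $8$ vertices has $13$ arcs and a vertex of out-degree $1$. So the step you flagged as ``where I expect the bulk of the work to lie'' is in fact where the argument breaks down. The paper's one-sentence proof mentions only the M\"obius ladder and does not address $\overline{M_8}$ either, so this is a shared gap rather than a divergence in approach.
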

From related unpublished work~\cite{HLPV202X}, we know that $\vec{C}_4$ also has (F3), and we conjecture that it in fact fractalizes. 
\begin{conj}
For all $k\ge 4$, $\vec{C}_k$ is a fractalizer.
\end{conj}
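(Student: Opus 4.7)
The plan is to prove the conjecture by induction on $n$ for each fixed $k$, following the template this paper establishes for $\vec{C}_5$. The base cases $k\in\{4,5\}$ are essentially handled: $k=5$ is the immediate corollary above, while for $k=4$ the result of~\cite{HLPV202X} gives (F3), which can be strengthened to (F5) by a standard induction showing that the parts of an extremal blow-up must themselves be extremal, combined with a finite check of the small cases $n<n_0$. The new content of the conjecture lies in $k\ge 6$.

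For each fixed $k\ge 6$, I would first derive a directed analog of Lemma~\ref{funky}: a flag algebra inequality bounding from below the density of ``near--blow-up'' configurations on $k+2$ vertices in an extremal digraph $D$ in terms of the $\vec{C}_k$ density. This would be a semidefinite programming computation on digraphs of size $\ell\ge k+3$, set up via a blow-up $D^*$ of $D$ into iterated blow-ups of $\vec{C}_k$ exactly as in Section~\ref{sec:lem}. A directed analog of Lemma~\ref{numFunky} then produces a partition $V(D)=X_1\cup\cdots\cup X_k$ such that almost all arcs agree with the $\vec{C}_k$ blow-up pattern. A useful advantage over the undirected case is rigidity: a $k$-set inducing $\vec{C}_k$ has only $k$ cyclic symmetries rather than the $2k$ dihedral symmetries of $C_k$, so the target partition is essentially unique rather than only up to reflection, which should streamline the bookkeeping.

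The structural step then mirrors Case~\ref{medium} and Case~\ref{large}. For small $n$ (up to some $n_1(k)$), an integer program analogous to $(P)$ rules out all arrangements of ``funky'' arcs except for a short list of exceptions to be dispatched by brute-force enumeration. For large $n$, one replaces the four-variable continuous program $(P')$ by a $k$-variable program and uses adaptive mesh refinement to obtain the contradiction; the balanced-parts claim analogous to Claim~\ref{balance} then completes the inductive step on $n$. Ruling out exceptional non-blow-up extremal graphs analogous to the M\"obius ladder is expected to be easier in the directed setting, since any such exception would have to admit an orientation making every induced $k$-cycle directed, which is a very restrictive condition for $k\ge 6$.

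The main obstacle is that flag algebra computations are fundamentally per-$k$: the number of labeled digraphs on $\ell$ vertices grows super-exponentially in $\ell$, and the required semidefinite program likely becomes infeasible around $k\approx 8$. A genuinely uniform proof for all $k\ge 6$ therefore probably requires a new idea -- for instance, an entropy argument in the spirit of~\cite{KNV2019} adapted to directed cycles, or an induction on $k$ in which one counts induced $\vec{C}_{k-1}$'s inside common in-neighborhoods of extremal vertices to reduce to a smaller instance. Without such an idea, this approach would establish the conjecture only for $k$ in a finite range beyond the cases already settled.
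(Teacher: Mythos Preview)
The statement you are attempting to prove is a \emph{conjecture}: the paper states it as an open problem and offers no proof. There is therefore nothing to compare your proposal against on the paper's side.

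More to the point, your proposal is not a proof but a research plan, and you yourself identify the fatal gap. The flag algebra inequality that drives Lemma~\ref{funky} is obtained by a semidefinite program whose size depends on the number of graphs on $\ell\ge k+3$ vertices; you correctly note that this becomes infeasible already around $k\approx 8$, and that a uniform argument for all $k$ would require a genuinely new idea (entropy, induction on $k$, or otherwise). A plan that handles at most finitely many values of $k$, with the remaining infinitely many cases deferred to an unspecified new idea, does not prove the conjecture.

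Even within the small-$k$ range your outline has holes. For $k=4$ you assert that (F3) from~\cite{HLPV202X} can be upgraded to (F5) by ``a standard induction\ldots combined with a finite check of the small cases $n<n_0$''; but that finite check has not been carried out anywhere, and the paper explicitly only \emph{conjectures} that $\vec{C}_4$ fractalizes. For $k\ge 6$ you assume that the directed flag algebra computation will in fact yield a usable inequality, that the resulting programs $(P')$ and $(P'')$ will close, and that no exceptional small extremal digraphs will appear; none of these is established. The appeal to rigidity of $\vec{C}_k$ (only cyclic rather than dihedral symmetries) is a reasonable heuristic for why the directed problem might be cleaner, but it is not an argument that rules out non-blow-up extremal examples.
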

For $\vec{C_3}$, the iterated balanced blow-up asymptotically achieves the maximum number of $\vec{C}_3$. Nevertheless, for many values of $n$, it fails to be extremal. This stems from the folklore fact that the number of $\vec{C_3}$ is maximized if and only if the graph is a regular (or near regular for even $n$) tournament. For an infinite number of values of $n$, including all values of the form $n=6k\pm 1$, the iterated balanced blow-up of $\vec{C_3}$ has vertices which differ in out-degree by at least $2$. So $\vec{C_3}$ has (F1) but not (F2).
%
%

\section*{Acknowledgement}
The authors thank an anonymous referee for comments on improving this manuscript.
This work used the computing resources at the Center for Computational Mathematics, University of Colorado Denver, including the Alderaan cluster, supported by the National Science Foundation award OAC-2019089.

\bibliographystyle{plainurl}
\bibliography{references}

\section*{Appendix}
The following is a list of the $23$ different values of $x_1,\dots,x_5$ such that program $(P)$ has a negative objective value. Note that $(P)$ produces values for $y_1,\dots,y_5$, which may have a different ordering than $x_1,\dots,x_5$. We therefore list all possible values of $x_1,\dots,x_5$ based on each $y_1,\dots,y_5$, up to isomorphism. \\

\noindent (1,1,1,3,3)
(1,3,1,1,3)
(1,1,2,2,3)
(1,2,3,2,1)
(1,2,3,1,2)
(1,2,2,1,3)
(1,2,2,2,2)
(2,2,2,2,3)
(2,2,2,2,4)
(2,2,2,3,3)
(2,3,2,2,3)
(1,3,3,3,3)
(2,2,2,3,4)
(2,2,3,3,3)
(2,3,2,3,3)
(2,3,3,3,3)
(3,3,3,3,4)
(3,3,3,4,4)
(3,4,3,3,4)
(3,3,4,4,4)
(3,4,3,4,4)
(4,4,4,5,5)
(4,5,4,4,5)

\end{document}